\documentclass[12pt]{amsart}
\usepackage{amsmath,amsxtra,amssymb,latexsym, amscd,amsthm,amsfonts}
\vfuzz2pt 
\hfuzz0pt 
\textheight 22.4truecm \textwidth 15truecm%
\newtheorem{thm}{Theorem}[section]
\newtheorem{cor}[thm]{Corollary}
\newtheorem{lem}[thm]{Lemma}
\newtheorem{prop}[thm]{Proposition}
\theoremstyle{definition}
\newtheorem{defn}[thm]{Definition}
\theoremstyle{remark}
\newtheorem{rem}[thm]{Remark}
\numberwithin{equation}{section}
\normalbaselineskip=14pt

\newcommand{\R}{\mathbb R}
\newcommand{\N}{\mathbb N}

\newcommand{\simuleq}[1]{\left\{\begin{aligned}#1\end{aligned}\right.}%

\begin{document}%

\title[]{  Some regularity properties of viscosity solution defined by Hopf formula}%
\author{NGUYEN HOANG}%
\address{Department of Mathematics,
College of Education, Hue University, 32 LeLoi, Hue, Vietnam}%
\email{nguyenhoanghue@gmail.com, nguyenhoang@hueuni.edu.vn}%


\begin{abstract}
Some properties of characteristic curves in connection with viscosity solution of Hamilton-Jacobi equations $(H,\sigma)$ defined by Hopf formula $u(t,x)=\max_{q\in\R^n}\{
\langle x,q\rangle -\sigma^*(q)-tH(q)\}$ are
studied. We are concerned with the points where the solution  $u(t,x)$ is differentiable, and the strip of the form
$\mathcal R=(0,t_0)\times \R^n$ of the domain $\Omega$ where  $u(t,x)$ is of class $C^1(\mathcal R).$ Moreover, we investigate the propagation of singularities in forward of this solution.
\end{abstract}
\maketitle

\section{Introduction}
Consider the Cauchy problem for Hamilton-Jacobi equation $(H,\sigma):$
 \begin{equation}\label{1.1}u_t + H(t,x,D_x u)=0\, , \,\, (t,x)\in \Omega=(0,T)\times \R^n,
\end{equation}
\begin{equation}\label{1.2}
u(0,x)=\sigma(x)\, , \,\, x\in \R^n.\end{equation}

It is well-known that, due to the nonlinearity of the Hamiltonian $H(t,x,p)$  in general, the smooth solutions of the problem exist in a narrow neighborhood  of the hyperplane $t=0.$ The studies of global solutions (i.e. the solutions defined on whole domain $\Omega$) of the Cauchy problems started in the decade of 1950s with some notions of generalized solutions. At the beginning, the notion of Lipschitz solution was taken into account. Typically, the solution is defined as a locally Lipschitz function $u(t,x)$ satisfying the equation (1.1) almost everywhere on $\Omega$ and $u(0,x)=\sigma(x),\ x\in \R^n.$ Unfortunately, this type of generalized solutions of the problem is not unique, thus one must restrict to consider the solutions in some specific class of functions.

In 1965,  Hopf \cite{h} proved two well-known formulas for representation of Lipschitz solutions of the Hamilton - Jacobi equations $(H,\sigma):$

\begin{equation}\label{1.3}u(t,x)=\min_{y\in \R^n} \Big\{ \sigma
(y)+tH^*\big (\frac {x-y}{t}\big)\Big \}, \end{equation} where
$H=H(p)$ is convex and superlinear, $\sigma$ is Lipschitz on $\R^n,$  and

\begin{equation}\label{1.4}u(t,x)=\max_{q\in\R^n}\{
\langle x,q\rangle -\sigma^*(q)-tH(q)\}\end{equation} if the function $H=H(p)$ is continuous;  $\sigma (x)$
is a convex and Lipschitz function, see \cite{h,be,lr}. Here * denotes the Fenchel conjugate.

These formulas are called Hopf formulas. Note that, when $n=1,$ the formula  (1.3) was proved by Lax \cite{la} in 1958, therefore this formula is also called Hopf - Lax formula.

In 1983, Crandall M.G. and
Lions P.L. in \cite{cl} first introduced the notion of viscosity solution that plays 
a fundamental role in studying Hamilton-Jacobi equations as well as the related problems such as
calculus of variation, optimal control theory, differential games, etc. By definition, a
viscosity solution of Hamilton-Jacobi equation is merely a
continuous function $u$ satisfying differential inequalities associated with the equations. 

In 1984, Bardi - Evans \cite{be} proved that the function $u(t,x)$ defined by (1.3) as well as  (1.4) is a viscosity solution of the corresponding problem $(H,\sigma).$

It is noticed that, the representation formula (1.3) and its generalization for the case $H=H(t,x,p)$ were widely studied under an essential assumption that $H(t,x,p)$ is a convex function in $p$ (see \cite{bado, cs} and references therein). Actually, ones then proved that the value function of a calculus of variation problem or an optimal control problem is a viscosity solution of the asscociated dynamic programming equation where the Hamiltonian is convex in the gradient variable. Many important results on the theoretical aspect as well as applications were obtained. Especially, regularity properties as well as propagation of singularities of viscosity solutions in the case of convex Hamiltonians are intensively studied, see \cite{ac,ac1,bcjs,fl}
especially \cite{cs} and references therein.

On the other hand, in the theory of differential games,  Hamiltonians of the associated dynamic programming equations are neither convex nor concave in general, see \cite{ca,e2}).  Nevertheless, not so many studies are realized for the case,  even  for simple Hamiltonian $H=H(p).$ In \cite{bafa},  Bardi M. and Faggian  S. presented explicit estimates below and above of the form ``maxmin" and ``minmax'' for the viscosity solutions  where either the Hamiltonians or the initial data are not necessarily convex, but can be expressed as the sum of a convex and a concave function. Recently, Evans \cite{e1, e2} establishes a general representation formula for nonconvex Cauchy problem $(H, \sigma)$ by methods of  ``generalized envelopes'' and ``adjoint and compensated compactness".

This paper is devoted to studying some regularity properties of viscosity solution $u(t,x)$ given by Hopf formula (1.4). We investigate the differentiability of $u(t,x)$ on the characteristic curves and define some strips of the form $\mathcal R=(0,t_0)\times \R^n\subset\Omega$ so that $u(t,x)\in C^1(\mathcal R)$ basing on the set of maximizers $\ell(t,x)$ in the formula (1.4). This can be considered as a bridge to fill the gap between viscosity solution and classical solution for nonconvex Hamiltonian.

The structure of the paper is as follows. In section 2 we present some necessary notions and properties of Hopf formula and viscosity solution. Next, in section 3, we suggest a classification  of characteristic curves  at each
 point of the domain and then study the differentiability properties of Hopf formula $u(t,x)$ on these curves.
In section 4, we establish various conditions based on the characteristics so that $u(t,x)$ defined by (1.4)
 is continuously differentiable on the strip of the form $(0,t_0)\times \R^n.$
 In the last section, we analyse the formation of singularities of the Hopf formula  $u(t,x)$ and show that the singularities may propagate forward from $t$-time $t_0$ to the boundary of the domain. Illustrative examples are  given.

This paper can be considered as a continuation of \cite{nh1} to the
cases where dimension of state variable $n$ is greater than 1. To the best of our knowledge, the results obtained here are new and significant in understanding the regularity of viscosity solution of the problem $(H,\sigma)$ with nonconvex Hamiltonian.

\medskip

 We use the following notations. Let  $T$ be a positive number, $\Omega =(0,T)\times
\R^n;\, |\, .\, |$ and $\langle .,.\rangle $ be the Euclidean norm and
the scalar product in $\R^n$, respectively, and let $B'(x_0,r)$ be the closed ball centered at $x_0$ with radius $r.$ For a function $u(t,x)$ defined on $\Omega,$ we denote $ D_xu=(u_{x_1},\dots, u_{x_n})$ and $Du=(u_t, D_xu).$ 

\section{Hopf formula and viscosity solution}

We now  consider the  Cauchy problem for Hamilton-Jacobi equation:

 \begin{equation}\label{2.1}u_t + H(D_x u)=0\, , \,\,
(t,x)\in \Omega =(0,T)\times \R^n,\end{equation}
\begin{equation}\label{2.2}u(0,x)=\sigma(x)\, ,
\,\, x\in \R^n,\end{equation} where the Hamiltonian $H(p)$  is a continuous function and $\sigma (x)$ is a convex function on $\R^n.$

\medskip

We assume a  compatible condition for $H(p)$ and $\sigma
(x)$ as follows:

\smallskip

(Hf1) :  {\it For every $(t_0,x_0)\in
[0,T)\times\R^n $,  there exist positive constants $r$ and $N$
such that
$$\langle x,p\rangle -\, \sigma ^* (p)-tH(p) <
 \max_{|q|\le N}\{\langle x,q\rangle -\, \sigma ^* (q)-tH(q)\},$$
whenever $ (t,x)\in [0,T)\times\R^n,\, |t-t_0|+|x-x_0|<r$ and
$| p| >N.$}

\medskip

Let $\sigma^*$ be the Fenchel conjugate of $\sigma.$ We denote by 
$$D=  {\rm dom}\, \sigma^*=\{y\in \R^n\ |\, \sigma^*(y)<+\infty\}$$ 
the effective domain of the convex function $\sigma^*.$

\medskip
From now on, the Hopf formula for Problem (2.1) - (2.2) is a function defined by
\begin{equation}\label{2.3}u(t,x)=\max_{q\in \R^n}\, \{\langle x,q\rangle -\, \sigma^*
(q)-t H(q)\}.\end{equation}
Let
\begin {equation} \label {2.4}\varphi (t,x,q) =\langle
x,q\rangle -\, \sigma ^* (q)-t H(q),\ (t,x)\in \Omega,\ q\in \R^n .\end{equation}

For each $(t,x)\in \Omega,$ denote
\begin{equation}\label{2.5}\ell (t,x)=\{q\in
\R^n\ | \ \varphi (t,x,q)=\max_{p\in \R^n}\varphi (t,x,p)\}.\end{equation}

\begin{rem}
In virtue of (Hf1), $\ell(t,x)\ne \emptyset,$ for all $(t,x)\in \Omega.$  Moreover, the multi-valued function 
$$\Omega \ni (t,x)\mapsto \ell(t,x)\subset \R^n$$
 is upper semi-continuous, see \cite{vht}.
\end{rem}

\medskip

First, we briefly recall definitions of some
kind of differentials  and viscosity solution as follows.

\begin{defn} Let $u=u(t,x): \ \Omega \to \R$ and let $(t_0,x_0)\in \Omega.$ For $(h,k)\in \R\times \R^n$ we denote 
$$  \tau (p,q,h,k) =\frac{u(t_0+h, x_0+k)-u(t_0,x_0)-ph -\langle q,k\rangle}{\sqrt{|h|^2 +|k|^2}},$$
\end{defn}
$$D^+u(t_0,x_0)=\{ (p,q)\in\R^{n+1}\, |\ \limsup_{(h,k)\to (0,0)} \tau (p,q,h,k) \ \le \ 0\} $$
$$D^-u(t_0,x_0)=\{(p,q)\in \R^{n+1}\, | \ \liminf_{(h,k)\to (0,0)} \tau(p,q,h,k)\ \ge \ 0\},$$
here $p\in \R,\ q\in \R^n.$

Then $D^+u(t_0,x_0)$ (resp. $D^-u(t_0,x_0)$) is called the {\it superdifferential} (resp. {\it subdifferential}) of $u(t,x)$ at $(t_0,x_0).$

\begin{defn}

A continuous functions $u: [0, T)\times \R^n\to \R$ is called a {\it viscosity subsolution} (resp. {\it viscosity supersolution}) of the Cauchy problem (1.1)-(1.2) on $\Omega=(0,T)\times \R^n,$ provided that the following hold:

\smallskip
{\rm (i)} $ u(0,x)=\sigma(x)$ for all $x\in \R^n$;

{\rm (ii)} For each $(t_0,x_0)\in  \Omega$ and $(p,q)\in D^+u(t_0,x_0),$ then
$$p +H(t_0,x_0,q) \le 0,$$
(resp. for each $(t_0,x_0)\in  \Omega$ and $(p,q)\in D^-u(t_0,x_0),$ then
$$p +H(t_0,x_0,q) \ge 0).$$

A continuous function $u: [0,T)\times \R^n \to \R$ is called a viscosity solution of the problem (1.1) -(1.2) if it is a viscosity sub- and supersolution of the problem.

It is noted that, there are several propositions which are equivalent to this definition, e.g., the notion of $C^1$-test function is used instead of semidifferentials, see \cite{cl}.
\end{defn}

 We collect here some properties of Hopf formula $u(t,x)$ and concepts concerning with convexity of a function for further presentation.
 \begin{thm}\label{2.4} Assume (Hf1). Then we have the following:

{\rm 1)} $u(t,x)$ is a convex function on $\Omega$ and it is a Lipschitz solution of the problem (2.1) - (2.2).

{\rm 2)} $u(t,x)$ is a viscosity solution of the problem (2.1) - (2.2).

{\rm 3)} $u(t,x)$ is differentiable at $(t,x)\in \Omega$  and only if for
$\ell(t,x)$ defined by (2.5) is a singleton. Thus, $u(t,x)$ is continuously differentiable in a open set $\mathcal V\subset \Omega$ if $\ell(t,x)$ is a singleton for all $(t,x)\in \mathcal V.$
\end{thm}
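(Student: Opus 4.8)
The plan is to treat the three assertions in turn, exploiting throughout that $u$ is a pointwise supremum of functions that are \emph{affine} in $(t,x)$.

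\textbf{Part 1.} For each fixed $q$ the map $(t,x)\mapsto \varphi(t,x,q)=\langle x,q\rangle-\sigma^*(q)-tH(q)$ is affine, so $u=\sup_q\varphi(\cdot,\cdot,q)$ is convex on $\Omega$; by (Hf1) the supremum is attained over a locally bounded set of $q$'s, so $u$ is finite, and a finite convex function on the open set $\Omega$ is locally Lipschitz. The initial condition follows from Fenchel--Moreau: $u(0,x)=\sup_q\{\langle x,q\rangle-\sigma^*(q)\}=\sigma^{**}(x)=\sigma(x)$ since $\sigma$ is convex and lower semicontinuous. For the equation a.e. I would use Rademacher's theorem to get differentiability a.e., and then the following envelope argument at any differentiability point $(t_0,x_0)$: choosing $q^*\in\ell(t_0,x_0)$, the affine minorant $g_{q^*}(t,x)=\langle x,q^*\rangle-\sigma^*(q^*)-tH(q^*)$ satisfies $g_{q^*}\le u$ with equality at $(t_0,x_0)$, so $u-g_{q^*}$ attains a minimum there and its gradient vanishes, giving $Du(t_0,x_0)=(-H(q^*),q^*)$. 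Hence $u_t+H(D_xu)=-H(q^*)+H(q^*)=0$ a.e., which is the Lipschitz-solution property.

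\textbf{Part 2.} Since $u$ is convex, $D^+u(t_0,x_0)$ is nonempty only where $u$ is differentiable, and there equals $\{Du\}=\{(-H(q^*),q^*)\}$; the computation just made gives $p+H(q)=0\le0$, so $u$ is a subsolution. The supersolution inequality is the heart of the matter. Using that for convex functions $D^-u(t_0,x_0)$ coincides with the convex subdifferential, and that (via Danskin's theorem, legitimate here because $\varphi$ is affine in $(t,x)$ and $\ell$ is locally bounded by (Hf1)) one has $D^-u(t_0,x_0)=\overline{\mathrm{co}}\,\{(-H(q),q)\mid q\in\ell(t_0,x_0)\}$, I must show $p+H(\xi)\ge 0$ for every $(p,\xi)=\sum_i\lambda_i(-H(q_i),q_i)$ with $q_i\in\ell(t_0,x_0)$, $\lambda_i\ge0$, $\sum_i\lambda_i=1$. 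Writing $\xi=\sum_i\lambda_iq_i$ and $c=u(t_0,x_0)$, each maximizer satisfies $t_0H(q_i)=\langle x_0,q_i\rangle-\sigma^*(q_i)-c$, while $\xi$ (in general not a maximizer) only satisfies the inequality $t_0H(\xi)\ge\langle x_0,\xi\rangle-\sigma^*(\xi)-c$. Subtracting and using convexity of $\sigma^*$ (so $\sigma^*(\xi)\le\sum_i\lambda_i\sigma^*(q_i)$) yields $t_0\big(H(\xi)-\sum_i\lambda_iH(q_i)\big)\ge \sum_i\lambda_i\sigma^*(q_i)-\sigma^*(\xi)\ge0$; since $t_0>0$ this is precisely $p+H(\xi)\ge0$.

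\textbf{Part 3.} From the same subdifferential formula, $\ell(t_0,x_0)$ a singleton $\{q^*\}$ forces $D^-u(t_0,x_0)=\{(-H(q^*),q^*)\}$ to be a singleton, which for a convex function is equivalent to differentiability. Conversely, if $u$ is differentiable then $D^-u$ is the single point $\{Du\}$; were $\ell$ to contain two distinct points $q_1\ne q_2$, the subgradients $(-H(q_i),q_i)$ would differ in their $q$-components, contradicting singletonness, so $\ell$ is a singleton. Finally, if $\ell(t,x)$ is a singleton on an open set $\mathcal V$, then $u$ is differentiable throughout $\mathcal V$, and a convex function differentiable on an open set is automatically $C^1$ there; hence $u\in C^1(\mathcal V)$. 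The main obstacle is the supersolution inequality in Part 2: at a singular point the subdifferential contains genuine convex combinations of the gradients $(-H(q_i),q_i)$, and naively $p+H(\xi)\ge0$ looks as though it should require $H$ concave. The decisive point is that the maximizers all lie on the level set where $\varphi(t_0,x_0,\cdot)=c$ while $\varphi(t_0,x_0,\xi)\le c$, so that convexity of $\sigma^*$ together with $t_0>0$ converts this one-sided information into exactly the concavity-type estimate $H(\xi)\ge\sum_i\lambda_iH(q_i)$ on the maximizer set, even though $H$ is merely continuous.
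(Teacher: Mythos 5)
Your proof is correct, but it is genuinely more self-contained than the paper's, which disposes of parts 1) and 2) by citation (to Hopf, Van--Hoang--Tsuji for the Lipschitz-solution property and to Bardi--Evans, Lions--Rochet for the viscosity property) and proves only part 3) directly. The one nontrivial ingredient you invoke is the Valadier/Danskin formula $D^-u(t_0,x_0)=\mathrm{co}\{(-H(q),q)\mid q\in\ell(t_0,x_0)\}$; its hard inclusion ($\subset$) does require the local compactness of the set of maximizers, which (Hf1) supplies exactly as you say, and the convex hull is automatically closed since $\ell(t_0,x_0)$ is compact and $q\mapsto(-H(q),q)$ is continuous. Granting that, your supersolution argument --- converting the fact that maximizers lie on the level set $\varphi(t_0,x_0,\cdot)=c$ while $\varphi(t_0,x_0,\xi)\le c$ into $H(\xi)\ge\sum_i\lambda_iH(q_i)$ via convexity of $\sigma^*$ and $t_0>0$ --- is exactly right and is essentially the Bardi--Evans mechanism that the paper outsources. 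For part 3) the paper argues differently in the easy direction: from $\ell(t_0,x_0)=\{q\}$ it deduces existence of all partial derivatives (with $u_x=q$, $u_t=-H(q)$) and then uses the fact that a convex function with all partials is differentiable; your route through singletonness of $D^-u$ is equivalent but leans again on the subdifferential formula, whereas the paper's avoids it. A side benefit of your approach is that the identification of $D^-u$ also yields the paper's later Theorem 3.5 (on reachable gradients) almost for free; a side cost is that you carry the burden of justifying the max-function subdifferential rule, which in a submitted version should be backed by a precise reference (e.g.\ Ioffe--Tikhomirov) together with the localization to $|q|\le N$ furnished by (Hf1).
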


\begin{proof} For the proof of 1) see \cite{h,vht}. There are several ways to prove 2), the first proof can be found in \cite{be}. Also see \cite{lr, ai}. 

For the proof of 3), first note that, if $u(t,x)$ is differentiable at $(t_0,x_0)$ then $\ell (t_0,x_0)=\{p\}$  is a singleton, see (\cite{vts}, p. 112). Conversely, if $\ell(t_0,x_0)$ is a singleton, say $\{q\},$ then all partial derivatives of $u(t,x)$ at $(t_0,x_0)$
 exist and $u_x(t_0,x_0)= q,\ u_t(t_0,x_0)=-H(q).$ Since the function $u(t,x)$ is convex, then it is differentiable at this point. Besides, if it is differentiable on $\mathcal V$ then it is continuously differentiable on this open set by a property of convex functions.
\end{proof}

\begin{defn} We call a point $(t_0,x_0)\in \Omega$ {\it regular} for $u(t,x)$ if the function is differentiable at this point. Other point is said to be {\it singular} if at which, $u(t,x)$ is not differentiable.
\end{defn}
Consequently, by Theorem \ref{2.4}, we see that $(t_0,x_0) \in \Omega$ is regular if and only if $\ell(t_0,x_0)$ is a singleton.

\begin{defn}
Let $\mathcal O$ be a convex subset of $\R^m$ and let $v:\ \mathcal O \to \R$ be a continuous function.

(a) The function $v$ is called {\it semiconcave} with linear modulus if there is a constant $C>0$ such that
$$\lambda v(y_1)+(1-\lambda ) v(y_2)-v(\lambda y_1+(1-\lambda)y_2) \le \lambda (1-\lambda)\frac C2 |y_1-y_2|^2$$
for any $y_1,\, y_2$ in $\mathcal O$ and for any $\lambda \in [0,1].$ The number $C$ is called a {\it semiconcavity constant} of $v.$

The function $v$ is called {\it semiconvex} if the function $-v$ is semiconcave.

(b) The function $v$ is called {\it uniformly convex} with constant $\Lambda>0$ if $v(y)-\frac{\Lambda}2|y|^2,\ y\in \mathcal O$ is a convex function.

\end{defn}

\begin{rem}

(i) The theory of semiconcave functions has been fully studied since
the last decades of previous century. The reader is referred to the monograph \cite{cs} for a comprehensive development of the topic.
\smallskip

(ii) The notion of semiconcavity (resp. uniform convexity) is a special case of the notion $\sigma$-smoothness (resp. $\rho$-convexity) of a function, see \cite{ap}. The following proposition is extracted from Prop. 2.6 of the just cited article.
\end{rem}

\begin{prop}\label{scc}Let $v:\R^m \to \R$ be a convex function. Moreover,

(i) Suppose that $v$ is uniformly convex with a constant $C>0.$ Then the Fenchel conjugate function $v^*$ is a semiconcave function with a semiconcavity constant $\frac1C >0.$

(ii) Suppose that $v$ is a semiconcave function with a semiconcavity constant $C^*>0.$ Then $v^*$ is a uniformly convex function with a constant $\frac1{C^*}.$

\end{prop}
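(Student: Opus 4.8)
The plan is to recast both hypotheses through the quadratic shift of $v$ and then pass to the Fenchel conjugate. From the elementary identity $\lambda|y_1|^2+(1-\lambda)|y_2|^2-|\lambda y_1+(1-\lambda)y_2|^2=\lambda(1-\lambda)|y_1-y_2|^2$ one sees at once that ``$v$ uniformly convex with constant $C$'' means exactly ``$v(y)-\frac C2|y|^2$ is convex'', and that ``$v$ semiconcave with constant $C^*$'' means exactly ``$v(y)-\frac{C^*}2|y|^2$ is concave''. Equivalently, in subgradient form: uniform convexity with constant $C$ says $v(y)\ge v(y_0)+\langle p_0,y-y_0\rangle+\frac C2|y-y_0|^2$ for all $y$ whenever $p_0\in\partial v(y_0)$, while semiconcavity with constant $C^*$ says the reverse inequality (with $\frac{C^*}2$) holds at supergradients. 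The one computational fact used throughout is that the conjugate of a quadratic is the reciprocal quadratic, $\sup_{u}\{\langle a,u\rangle-\frac\kappa2|u|^2\}=\frac1{2\kappa}|a|^2$.

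For (i) I would write $v=g+\frac C2|\cdot|^2$ with $g$ convex and conjugate. As the quadratic term is finite and continuous, the conjugate of the sum is the infimal convolution of the conjugates, giving the Moreau envelope \[ v^*(p)=\inf_z\Big\{g^*(z)+\tfrac1{2C}|p-z|^2\Big\}. \] Completing the square yields \[ v^*(p)-\tfrac1{2C}|p|^2=\inf_z\Big\{\big[g^*(z)+\tfrac1{2C}|z|^2\big]-\tfrac1C\langle p,z\rangle\Big\}, \] and for each fixed $z$ the bracketed expression is affine in $p$; an infimum of affine functions is concave, so $v^*-\frac1{2C}|\cdot|^2$ is concave, i.e.\ $v^*$ is semiconcave with constant $1/C$. (Equivalently, substituting the quadratic lower bound for $v$ into $v^*(p)=\sup_y\{\langle p,y\rangle-v(y)\}$ and taking the supremum gives the semiconcavity bound $v^*(p)\le v^*(p_0)+\langle y_0,p-p_0\rangle+\frac1{2C}|p-p_0|^2$ for $y_0\in\partial v^*(p_0)$.)

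For (ii) I would argue dually. Fix $p_0$ and pick $y_0\in\partial v^*(p_0)$, so that $p_0\in\partial v(y_0)$ and $v^*(p_0)=\langle p_0,y_0\rangle-v(y_0)$. Since $v$ is simultaneously convex and semiconcave, its subdifferential and superdifferential at $y_0$ are both nonempty and hence $v$ is differentiable there with $\nabla v(y_0)=p_0$; in particular the semiconcavity upper bound $v(y)\le v(y_0)+\langle p_0,y-y_0\rangle+\frac{C^*}2|y-y_0|^2$ holds. Inserting this into $v^*(p)=\sup_y\{\langle p,y\rangle-v(y)\}$ and taking the supremum over $u=y-y_0$ gives \[ v^*(p)\ \ge\ \langle p,y_0\rangle-v(y_0)+\sup_{u}\Big\{\langle p-p_0,u\rangle-\tfrac{C^*}2|u|^2\Big\}=\langle p,y_0\rangle-v(y_0)+\tfrac1{2C^*}|p-p_0|^2, \] which rearranges, using $v^*(p_0)=\langle p_0,y_0\rangle-v(y_0)$, to \[ v^*(p)\ \ge\ v^*(p_0)+\langle y_0,p-p_0\rangle+\tfrac1{2C^*}|p-p_0|^2. \] This is precisely the subgradient characterization of uniform convexity of $v^*$ with constant $1/C^*$. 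Alternatively, one may note that (i) and (ii) are the two halves of the single correspondence ``$v$ uniformly convex with constant $C$ $\iff$ $v^*$ semiconcave with constant $1/C$'', whence (ii) follows from (i) applied to $v^*$ together with the involution $v^{**}=v$.

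The real obstacle lies entirely in (ii). In (i) the passage to the conjugate is a clean identity needing no regularity, but in (ii) one must legitimately invoke the semiconcavity upper bound at the precise gradient $p_0$ delivered by the duality $y_0\in\partial v^*(p_0)$; this rests on the fact---standard in the theory of semiconcave functions, see \cite{cs}---that a function which is at once convex and semiconcave with linear modulus is of class $C^{1,1}$. A secondary point requiring care is that, although $v$ is finite on all of $\R^m$, its conjugate $v^*$ may have a proper effective domain (e.g.\ when $v$ is affine), so the uniform convexity of $v^*$ is to be understood on $\mathrm{dom}\,v^*$; and the conjugate-of-a-sum formula used in (i) needs a qualification condition, here supplied by the continuity of the quadratic term.
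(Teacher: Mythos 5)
Your proof is essentially correct, but note that the paper itself offers no proof of this proposition: it is quoted verbatim from Prop.~2.6 of Az\'e--Penot \cite{ap}, where it arises as a special case of the general duality between $\rho$-convexity and $\sigma$-smoothness of convex functions. Your argument is therefore a genuinely different, self-contained route. For (i) you conjugate the decomposition $v=g+\tfrac{C}{2}|\cdot|^2$ into a Moreau envelope and read off the concavity of $v^*-\tfrac{1}{2C}|\cdot|^2$ as an infimum of functions affine in $p$; for (ii) you exploit the fact that a convex, semiconcave function is differentiable (so the quadratic upper bound holds at the exact gradient $p_0=\nabla v(y_0)$ with $y_0\in\partial v^*(p_0)$), insert that bound into the supremum defining $v^*$, and land on the strong subgradient inequality $v^*(p)\ge v^*(p_0)+\langle y_0,p-p_0\rangle+\tfrac{1}{2C^*}|p-p_0|^2$, which characterizes uniform convexity. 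Both computations check out, and the two caveats you flag --- the qualification condition for the conjugate-of-a-sum formula, and the fact that $\mathrm{dom}\,v^*$ may be a proper subset of $\R^m$ --- are exactly the right ones; what your proof buys over the paper's citation is an elementary, verifiable argument using only the identity $\lambda|y_1|^2+(1-\lambda)|y_2|^2-|\lambda y_1+(1-\lambda)y_2|^2=\lambda(1-\lambda)|y_1-y_2|^2$ and the conjugate of a quadratic. One caution: your closing remark that (ii) ``follows from (i) applied to $v^*$ together with $v^{**}=v$'' is backwards --- applying (i) to $v^*$ gives ``$v^*$ uniformly convex $\Rightarrow$ $v$ semiconcave,'' i.e.\ the converse of (ii), not (ii) itself; so the direct argument you give for (ii) is the one that must carry the weight, and it does.
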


\section{A classification of characteristics}

In this section we focus on the study of the relationship between Hopf formula and characteristics. To this aim, let us recall the theory of Cauchy method of characteristics for Problem (\ref{2.1})-(\ref{2.2}).
\medskip

First, by the routine, we assume that $H(p)$ and $\sigma (x)$ are of class $C^2(\R^n).$
\medskip

The characteristic differential equations of Problem
(\ref{2.1})-(\ref{2.2}) is as follows
\begin{equation}\label{2.5}
\dot x=H_p \ ;\qquad \dot v = \ \langle H_p,p\rangle
 - \ H \ ;\qquad \dot p=0 \,\end{equation}
with initial conditions
\begin{equation}\label{2.6} x(0)=y \ ;\qquad v(0)=\sigma(y)\ ;\qquad p(0)=
\sigma _y(y)\ ,\quad y\in \R^n.\end{equation}

Then a characteristic strip of the Cauchy problem
(\ref{2.1})-(\ref{2.2}) (i.e., a solution of the system of
differential equations (\ref{2.5}) - (\ref{2.6})) is defined by
\begin{equation}\label{2.7}\simuleq{x&=x(t,y)=y+t H_p(\sigma_y(y)), \\
v&=v(t,y)=\sigma(y)+t\{
\langle H_p(\tau,\sigma_y(y)),\sigma_y(y)\rangle \}-tH(\sigma_y(y)),\\
 p&= p(t,y)\ =\ \sigma_y(y).}\end{equation}

The first component of solutions (\ref{2.7}) is called a characteristic curve (briefly, characteristics) emanating from $y,$ i.e., the straight line defined by
\begin{equation}\label{2.8}\mathcal C:\ x=x(t,y)=y+t H_p(\sigma_y(y)),\ t\in [0,T]. \end{equation}

Let $t_0\in (0,T].$ If for any $t\in (0,t_0)$ such that $x(t,\cdot):\ \R^n \to \R^n$ is a diffeomorphism, then $u(t,x)=v(t, x^{-1}(t,x))$ is a $C^2$ solution of the problem on the region $(0,t_0)\times \R^n.$

\smallskip
From now on, we make an additional assumption on $H$ and $\sigma.$

\smallskip

(Hf2):  Assume that $H$ and $\sigma$ are functions of class $C^1(\R^n).$ 

\smallskip

Note that, in this case, the characteristic strip (3.3) is also defined.

\smallskip

Let $(t_0,x_0) \in \Omega. $ Denote by $\ell^*(t_0,x_0)$ the set of
all $y\in \R^n$ such that there is a characteristic curve emanating
from $y$ and passing the point $(t_0,x_0).$ We have $\ell (t_0,x_0)
\subset {\sigma}_y(\ell^*(t_0,x_0)),$ see \cite{nh1}. Therefore
$\ell^*(t_0,x_0) \ne \emptyset.$

\begin{prop} Let $(t_0,x_0)\in \Omega.$ Then a characteristic curve passing $(t_0,x_0)$ has form
\begin{equation}\label {2.9} x=x(t,y)=x_0+(t-t_0) H_p(\sigma_y(y)),\ t\in [0,T] \end{equation}
for some $y\in \ell^*(t_0,x_0).$
\end{prop}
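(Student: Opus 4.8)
The plan is to combine the definition of $\ell^*(t_0,x_0)$ with the explicit parametrization \eqref{2.8} of a characteristic curve; the whole argument then amounts to re-anchoring the straight line so that the value $t=t_0$ corresponds to the position $x=x_0$, instead of reading the line off from its foot on the hyperplane $\{t=0\}$.

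First I would fix an arbitrary characteristic curve through $(t_0,x_0)$. By \eqref{2.8} each characteristic is the straight line $x(t,y)=y+tH_p(\sigma_y(y))$ issuing from some starting point $y\in\R^n$; since the curve passes through $(t_0,x_0)$, the point $y$ lies in $\ell^*(t_0,x_0)$ by the very definition of that set. (The nonemptiness $\ell^*(t_0,x_0)\ne\emptyset$ needed to make the statement nonvacuous has already been recorded in the paragraph preceding the proposition.)

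Next I would impose the condition that the line meets $(t_0,x_0)$. Setting $t=t_0$ in \eqref{2.8} gives $x_0=y+t_0H_p(\sigma_y(y))$, and hence $y=x_0-t_0H_p(\sigma_y(y))$. Here the one substantive fact I rely on is that the argument of $H_p$ is constant along the whole curve: the third characteristic equation $\dot p=0$ keeps the momentum equal to its initial value $\sigma_y(y)$ for all $t$, so $H_p(\sigma_y(y))$ is the same fixed direction vector at every time.

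Finally, substituting this expression for $y$ back into \eqref{2.8} and cancelling yields
$$x(t,y)=\bigl(x_0-t_0H_p(\sigma_y(y))\bigr)+tH_p(\sigma_y(y))=x_0+(t-t_0)H_p(\sigma_y(y)),$$
which is exactly \eqref{2.9}. I do not expect any real obstacle: beyond this elementary cancellation the proof uses only the constancy of $p$ along characteristics. The single point worth stating carefully is the logical matching between the two descriptions---that every $y$ produced in this way belongs to $\ell^*(t_0,x_0)$ and, conversely, every $y\in\ell^*(t_0,x_0)$ gives rise to such a line---so that \eqref{2.9} captures precisely the family of characteristics passing through $(t_0,x_0)$.
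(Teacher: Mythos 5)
Your argument is correct and is essentially the paper's own proof of the forward direction: both fix the characteristic $x=y+tH_p(\sigma_y(y))$ through $(t_0,x_0)$, read off $y=x_0-t_0H_p(\sigma_y(y))$ from $t=t_0$ (using $\dot p=0$ so that the direction vector is constant), and substitute back to obtain \eqref{2.9}. The paper also carries out the converse explicitly---that for $y\in\ell^*(t_0,x_0)$ the line \eqref{2.9} coincides with the characteristic emanating from $y$, since both are integral curves of $x'=H_p(\sigma_y(y))$ through the same point---a step you flag as needing care but do not write out; it follows by the same substitution, so no real gap remains.
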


\begin{proof} Let $\mathcal C: \ x=x(t,y)=y+t H_p(\sigma_y(y))$ be a characteristic curve passing $(t_0,x_0).$ By definition, $y\in \ell^*(t_0,x_0).$  Then we have

$$x_0=y+{t_0} H_p(\sigma_y(y))$$
Therefore, $$x=x_0-{t_0} H_p(\sigma_y(y))+t H_p(\sigma_y(y)) = x_0+(t-t_0) H_p(\sigma_y(y)).$$

Conversely, let $\mathcal C_1: x=x(t,y)= x_0+(t-t_0) H_p(\sigma_y(y))$ for $y\in \ell^*(t_0,x_0)$ be some curve passing $(t_0,x_0).$ Then  we can rewrite $\mathcal C_1$ as:
\begin{equation}\label{2.10} x=x_0-{t_0} H_p(\sigma_y(y)) +t H_p(\sigma_y(y))=x_0+(t-t_0)H_p(\sigma_y(y)).\end{equation}

On the other hand, let $\mathcal C_2:$
\begin{equation}\label{2.11} x= y +t H_p(\sigma_y(y))\end{equation}
be a characteristic curve also passing $(t_0,x_0).$ Besides that, both $\mathcal C_1,\ \mathcal C_2$ are integral curves of the ODE $x' =H_p(\sigma_y(y)),$ thus they must coincide. This proves the proposition.
\end{proof}

\begin{rem}
Suppose that $\sigma_y(y)=p_0\in \ell(t_0,x_0)$ then $y$ belongs to the subgradient of convex function $\sigma^*$ at $p_0: y\in \partial \sigma^*(p_0).$ Moreover, from (\ref{2.10}) and (\ref{2.11}), we have $y=x_0-t_0H_p(p_0).$
\end{rem}

Now, let $\mathcal C $ be a characteristic curve passing $(t_0,x_0)$ that is written as
$$x=x(t,y)=x_0+(t-t_0) H_p(\sigma_y(y)),\ t\in [0,T]$$

We say that the characteristic curve $\mathcal C$ is of the {\it type (I)  at point} $(t_0,x_0) \in \Omega$, if $\sigma_y(y) =p_0\in \ell(t_0,x_0).$ If $\sigma_y(y)\in \sigma_y(\ell^*(t_0,x_0))\setminus \ell(t_0,x_0)$ then $\mathcal C$ is said of {\it type (II)  at point} $(t_0,x_0).$
\medskip

\smallskip

The following lemma  is helpful in studying Fenchel conjugate of $C^1-$ convex function.
\begin{lem} {\rm {(see} \cite {nh3})}
Let $v$ be a convex function and $D={\rm dom}\; v\subset \R^n.$ Suppose that there exist $p,\,  p_0\in D,\ p\ne p_0$ and $y\in \partial v(p_0)$ such that
$$\langle y,p-p_0\rangle = v(p) -v(p_0).$$

Then for all $z$ in the straight line segment $[p,p_0]$ we have
$$v(z) =\langle y,z\rangle -\langle y,p_0\rangle +v(p_0).$$

Moreover, $y\in \partial v(z)$ for all $z\in [p,p_0].$
\end{lem}
\begin{proof} For the convenience of the reader, we reproduce the proof here.
Take $z=\lambda p +(1-\lambda )p_0\in [p,p_0],\ \lambda \in [0,1].$ Then we have
$$v(z)\le \lambda v(p)+(1-\lambda)v(p_0) =\lambda (v(p)-v(p_0)) +v(p_0).$$

From the hypotheses, we have
$$\aligned v(z)\le &\  \lambda \langle y,p-p_0\rangle +v(p_0)\\
\le &\  \langle y,\lambda p+(1-\lambda) p_0 -p_0\rangle +v(p_0).\endaligned$$

On the other hand, since $y\in \partial v(p_0), $ then
$$\langle y,\lambda p+(1-\lambda) p_0 -p_0\rangle  \le  v(z)-v(p_0).$$

Thus $$v(z)=\langle y,z\rangle -\langle y,p_0\rangle +v(p_0).$$

Next, let $z\in [p,p_0].$ For any $x\in D, $ we have
$$\aligned v(x)-v(z)=&\ v(x)- \langle y,z\rangle +\langle y,p_0\rangle -v(p_0)\\
= &\ v(x)-v(p_0) -\langle y, z-p_0\rangle\\
\ge &\ \langle x-p_0,y\rangle -\langle z-p_0, y\rangle\\
\ge &\  \langle x-z,y\rangle.\endaligned$$

This gives us that $y\in \partial v(z).$
\end{proof}

Now we present  some properties of characteristic curves of type (I) at $(t_0,x_0)$ given by the following theorem.

\medskip

\begin{thm} Assume (Hf1), (Hf2).  Let $(t_0,x_0)\in \Omega = (0,T) \times \R^n,\ p_0= \sigma_y(y_0)\in \ell (t_0,x_0)$ and let
\begin{equation}\label{2.12} \mathcal C: x= x(t)= x_0 +(t-t_0) H_p(p_0), (t,x)\in \Omega \end{equation}
be a characteristic curve of type (I) at $(t_0,x_0).$  Then we have the following:

\smallskip

(i) For all $(t,x)\in \mathcal C, \ 0\le t\le t_0,$ then $p_0\in \ell (t,x).$ Moreover, $\ell(t,x)\subset \ell(t_0,x_0).$ 

\smallskip

(ii) For all $(t,x)\in \mathcal C, \ 0\le t< t_0,$ the set $\ell(t,x)$ is singleton, here $\ell(t,x)=\{p_0\}.$
\end{thm}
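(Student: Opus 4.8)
The plan is to exploit the variational structure of the Hopf formula together with the key Lemma about Fenchel conjugates of $C^1$ convex functions. Let me set up the main identities first.

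**Setting up the framework.**

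Let me first understand the relationship between maximizers $\ell(t,x)$ and the function $\varphi$.

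We have $\varphi(t,x,q) = \langle x,q\rangle - \sigma^*(q) - tH(q)$, and $p_0 \in \ell(t_0,x_0)$ means $\varphi(t_0,x_0,\cdot)$ attains its maximum at $p_0$.

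By Remark 3.3, since $p_0 = \sigma_y(y_0) \in \ell(t_0,x_0)$, we have $y_0 \in \partial\sigma^*(p_0)$ and $y_0 = x_0 - t_0 H_p(p_0)$.

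Let me think about what the first-order condition for the maximum tells us. Since $H, \sigma$ are $C^1$ but $\sigma^*$ need not be differentiable, the maximizer $p_0$ satisfies a subgradient condition.

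The plan is to reduce everything to an elementary fact about affine functions of $t$, by exploiting the invariance of the quantity $x-tH_p(p_0)$ along the characteristic. First I would record, from Remark 3.2, that $y_0=x_0-t_0H_p(p_0)\in\partial\sigma^*(p_0)$, and observe that every point $(t,x)\in\mathcal C$ satisfies $x=x_0+(t-t_0)H_p(p_0)=y_0+tH_p(p_0)$, so that $x-tH_p(p_0)=y_0$ is constant along $\mathcal C$. Substituting $x=y_0+tH_p(p_0)$ into $\varphi$ and subtracting, I expect the identity
\[
\varphi(t,x,p_0)-\varphi(t,x,q)=C(q)+t\,B(q),
\]
where $C(q)=\sigma^*(q)-\sigma^*(p_0)-\langle y_0,q-p_0\rangle$ and $B(q)=H(q)-H(p_0)-\langle H_p(p_0),q-p_0\rangle$. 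The decisive feature is that, for each fixed $q$, the right-hand side is \emph{affine} in $t$.

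For part (i), set $f_q(t):=C(q)+t\,B(q)$. I would note two things: $f_q(0)=C(q)\ge 0$, which is exactly the subgradient inequality for $\sigma^*$ at $p_0$ with subgradient $y_0$; and $f_q(t_0)=\varphi(t_0,x_0,p_0)-\varphi(t_0,x_0,q)\ge 0$, since $p_0\in\ell(t_0,x_0)$. An affine function nonnegative at both endpoints of $[0,t_0]$ is nonnegative throughout, so $\varphi(t,x,p_0)\ge\varphi(t,x,q)$ for all $q$ and all $t\in[0,t_0]$, i.e. $p_0\in\ell(t,x)$. For the inclusion $\ell(t,x)\subset\ell(t_0,x_0)$, take $q\in\ell(t,x)$ with $t\in(0,t_0)$; then $f_q$ vanishes at an interior point while staying nonnegative on $[0,t_0]$, hence $f_q\equiv 0$, so $f_q(t_0)=0$, which says $q\in\ell(t_0,x_0)$. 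The endpoint $t=0$ I would handle separately: there $x=y_0$, and the maximizers of $\varphi(0,y_0,\cdot)=\langle y_0,\cdot\rangle-\sigma^*$ form $\partial\sigma(y_0)$, which is the singleton $\{p_0\}$ because $\sigma\in C^1$; this gives both $p_0\in\ell(0,y_0)$ and $\ell(0,y_0)=\{p_0\}\subset\ell(t_0,x_0)$.

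For part (ii) I would argue by contradiction: suppose $q\in\ell(t,x)\setminus\{p_0\}$ for some $t\in(0,t_0)$. Being a maximizer, $q$ lies in $\mathrm{dom}\,\sigma^*$, so Lemma 3.3 is applicable. The interior-vanishing argument above forces $f_q\equiv 0$, in particular $C(q)=0$, which is precisely the equality hypothesis $\langle y_0,q-p_0\rangle=\sigma^*(q)-\sigma^*(p_0)$ of Lemma 3.3 for the convex function $v=\sigma^*$ with $y_0\in\partial\sigma^*(p_0)$. The lemma then yields $y_0\in\partial\sigma^*(z)$ for every $z\in[q,p_0]$. By the conjugate subgradient relation together with $\sigma^{**}=\sigma$, this is equivalent to $z\in\partial\sigma(y_0)$ for all such $z$; but $\sigma\in C^1$ forces $\partial\sigma(y_0)=\{\sigma_y(y_0)\}=\{p_0\}$, so $[q,p_0]=\{p_0\}$ and $q=p_0$, a contradiction. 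The case $t=0$ is again the singleton computation from part (i).

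The main obstacle is the very first step: recognizing the invariance $x-tH_p(p_0)\equiv y_0$ along $\mathcal C$ and extracting from it the affine-in-$t$ decomposition with nonnegative constant term $C(q)$. Once that structure is available, part (i) is pure linear bookkeeping about affine functions, and the only substantive input to part (ii) is Lemma 3.3 combined with the differentiability of $\sigma$. The one place demanding care is the endpoint $t=0$, where the interior-vanishing argument degenerates and must be replaced by the observation that $C^1$ initial data has a singleton subdifferential.
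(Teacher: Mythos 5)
Your proof is correct and follows essentially the same route as the paper: the same affine-in-$t$ decomposition of $\varphi(t,x,p_0)-\varphi(t,x,q)$ along $\mathcal C$ (your $f_q$ is the paper's $-\eta$), the same two endpoint inequalities coming from the subgradient inequality at $t=0$ and maximality at $t=t_0$, and the same appeal to Lemma 3.3 in part (ii). The only cosmetic difference is the final contradiction in (ii), where you use the conjugate subgradient relation $y_0\in\partial\sigma^*(q)\Leftrightarrow q\in\partial\sigma(y_0)=\{p_0\}$ while the paper invokes essential strict convexity of $\sigma^*$ via Rockafellar's Theorem 26.3; both hinge on $\sigma\in C^1$, and your separate treatment of the endpoint $t=0$ is a slight tightening of the paper's argument.
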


As a consequence, if the characteristic curve $\mathcal C: x=x(t)$ is of type (I) at $(t_0,x_0)$ then it is of type (I) at any point $(t_1,x(t_1)), \ t_1\le t_0$ and the Hopf formula is differentiable on a piece of curve $\mathcal C$ corresponding to $t\in [0,t_0).$
\begin{proof}

Take an arbitrary $p\in \R^n$ and denote by
$$ \eta(t,p) =\varphi (t,x,p)-\varphi (t,x,p_0),\ (t,x)\in \mathcal C, \ t\in [0,t_0],$$
where $\varphi (t,x,p)=\langle x,p\rangle -\sigma^*(p)-t H(p).$ Then
\begin{equation}\label{2.13} \eta(t,p)=\langle x(t), p-p_0\rangle -(\sigma^*(p)-\sigma^*(p_0))-t(H(p)-H(p_0))\end{equation}
for $(t,x)\in \mathcal C.$
\smallskip

First, we will check that $\eta(t,p)\le 0$  for all $t\in [0,t_0].$
\smallskip

It is obviously that, $\eta(t_0,p)\le 0.$ On the other hand, from (\ref{2.13}) and Remark 3.2, we have
$$\eta(0,p)=\langle y_0, p-p_0\rangle -(\sigma^*(p)-\sigma^*(p_0)),$$

where $y_0\in \partial \sigma^*(p_0).$ By a property of subgradient of convex function, we have
\begin{equation}\label{2.14}\eta(0,p)=\langle y_0, p-p_0\rangle -(\sigma^*(p)-\sigma^*(p_0))\le  0.\end{equation}

As a result, we have $\eta(0,p) \le  0; \ \eta(t_0,p)\le 0.$
\smallskip

Since $x=x(t)=x_0+(t-t_0)H_p(p_0), $ then from (\ref{2.13}) we also have
$$\eta'(t,p)=\langle H_p(p_0),p-p_0\rangle -(H(p)-H(p_0))=c\  \  ({\text {const}}),\ \forall t\in [0,t_0].$$
\smallskip
Now we begin to prove (i). Fix $(t_1,x_1)\in \mathcal C$ where $ 0\le t_1\le t_0$ and $x_1=x(t_1).$ For any $p\in \R^n, $ we have
\smallskip

+  If $\eta'(t,p)=c> 0$ then $\eta(t_1,p) < \eta(t_0,p)\le 0.$
\smallskip

+ If $\eta' (t,p)=c\le 0$ then $\eta(t_1,p)\le \eta (0,p)\le 0.$
\smallskip

Thus we obtain that for all $p\in \R^n, \varphi (t_1,x_1,p)\le \varphi (t_1,x_1,p_0).$   Consequently, $p_0\in \ell(t_1,x_1)$ for any $(t_1,x_1)\in \mathcal C, \ t_1\in [0,t_0].$
\smallskip

Next, we check that $\ell(t,x)\subset \ell(t_0,x_0), t\in [0,t_0].$ To this end, take $p\in \R^n\setminus \ell(t_0,x_0).$ If $\eta'(t,p)=c\ge 0$ we have
$$\eta (t,p)\le \eta(t_0,p)<0, $$
 and if  $\eta'(t,p)=c< 0,$ then
$$\eta (t,p)< \eta(0,p)=\langle y,p-p_0\rangle -(\sigma^*(p)-\sigma^*(p_0))\le 0, \ t\in [0, t_0).$$
Therefore, in any case, $\eta(t,p)<0.$ This means that $p\notin \ell(t,x)$ and the inclusion $\ell(t,x)\subset \ell(t_0,x_0)$ has been proved.

The proof  of (i) is then complete.
\smallskip

The next step  is to prove (ii). Let $(t_1,x_1)\in \mathcal C$ where $t_1\in [0,t_0).$  Take $p\in \ell(t_1,x_1).$  Then we have
\begin{equation}\label{type}\eta(t_1,p)=\varphi(t_1,x_1,p)-\varphi(t_1,x_1, p_0) =0.\end{equation}

As before, we have $\eta'(t,p)=c\ \ ({\text {const}})$

If $c>0$ then $\eta(t_1, p)<\eta(t_0, p)\le 0$ and if $c<0$ then $\eta(t_1, p)<\eta(0, p)\le 0.$ These yield a contradiction to the equality (\ref{type}).

Now we consider  the case $\eta'(t,p)=0, \ \forall t\in [0, t_0]$ or
\begin{equation}\label{ty} \langle H_p(p_0),p-p_0\rangle -(H(p)-H(p_0))=0.\end{equation}

From the equality (\ref{type}) we have
\begin{equation}\label{typ}\langle x_0, p-p_0\rangle  -(\sigma^*(p)-\sigma^*(p_0))={t_0}(H(p)-H(p_0))\end{equation}

Subtracting both sides of (\ref{typ}) by $\langle {t_0} H_p(p_0), p-p_0\rangle,$ and noticing that $ y_0= x_0-{t_0}H_p(p_0),$ we get
\begin{equation}\label{2.17}\langle y_0, p-p_0\rangle  -(\sigma^*(p)-\sigma^*(p_0))=(H(p)-H(p_0)) -\langle H_p(p_0),p-p_0\rangle\end{equation}
Thus
$$\langle y_0,p-p_0\rangle -(\sigma^*(p)-\sigma^*(p_0))=0.$$

As mentioned before,  since $p_0= \sigma_y(y_0),$ then $y_0\in \partial \sigma^*(p_0).$ 
If $p\ne p_0$ we see that the straight line segment $[p,p_0]$ is contained in $\mathcal D=\{z\in \textrm{dom} \sigma^*\, | \ \partial \sigma^*(z)\ne \emptyset\}.$ Applying Lemma 3.3 we see that the function $\sigma^*$ is not strictly convex on the set $[p,p_0].$ This is a contradiction, since $\sigma(x)$ is of class $C^1(\R^n),$
  then $\sigma^*$ is essentially strictly convex on $D=$ dom$\sigma^*.$ In particular, $\sigma^* $ is strictly convex on $[p,p_0], $ see (\cite{ro}
, Thm. 26.3).
\smallskip
Thus $p=p_0$ and consequently, $\ell (t,x) =\{p_0\}$ for all $(t,x)\in \mathcal C, \ 0\le t < t_0.$
\end{proof}

For a locally Lipschitz function, it is promising to use the notion
of sub- and superdifferential as well as reachable gradients, see
\cite {cs}, e.g.,  to study its differentiability. We use Theorem
3.4 to establish a relationship between $\ell(t_0,x_0)$ and the set
of reachable gradients. 
\smallskip

Let us define the set $D^*u(t_0,x_0)$ of {\it reachable gradients} of a function $u(t,x)$ at $(t_0,x_0)$ as follows:

Given $(p,q)\in \R^{n+1}.$ We say that $(p,q)\in D^*u(t_0,x_0) $ if and only if there exists a sequence $(t_k,x_k)_k\subset \Omega\setminus \{(t_0,x_0)\}$ such that $u(t,x)$ is differentiable at $(t_k,x_k)$ and,
$$(t_k,x_k)\to (t_0,x_0), \ (u_t(t_k,x_k), D_xu(t_k,x_k))\to (p,q)\  \text{ as}\  k\to \infty.$$

If $u(t,x)$ is a locally Lipschitz function,  then $D^*u(t,x) \ne
\emptyset$  and it is a compact set (\cite{cs}, p.54).
\smallskip

Now let $u(t,x)$ be the Hopf formula and let $(t_0,x_0)\in \Omega.$ We denote by
\begin{equation}\label {rg}\mathcal H(t_0,x_0)=\{(-H(q), q)\ | \ q\in \ell(t_0,x_0)\}.\end{equation}
 Then a relationship between $D^*u(t_0,x_0)$ and the set $\ell (t_0,x_0)$ is given by the following theorem.

\begin{thm}
Assume (Hf1), (Hf2). Let $u(t,x)$ be the Hopf formula for Problem (2.1)-(2.2). Then for all $(t_0,x_0)\in \Omega,$ we have
$$D^*u(t_0,x_0)=\mathcal H(t_0,x_0).$$
\end{thm}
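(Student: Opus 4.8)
The plan is to prove the two inclusions $D^*u(t_0,x_0)\subset \mathcal H(t_0,x_0)$ and $\mathcal H(t_0,x_0)\subset D^*u(t_0,x_0)$ separately. The common ingredient is the description of the gradient at regular points coming from Theorem \ref{2.4}, part 3): at any point where $u$ is differentiable the set $\ell$ is a singleton $\{q\}$, and there $D_xu=q$, $u_t=-H(q)$, so the full gradient equals exactly $(-H(q),q)$.

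For the inclusion $D^*u(t_0,x_0)\subset \mathcal H(t_0,x_0)$, I would take $(p,q)\in D^*u(t_0,x_0)$ together with a sequence $(t_k,x_k)\to(t_0,x_0)$ of regular points such that $(u_t(t_k,x_k),D_xu(t_k,x_k))\to(p,q)$. At each such point $\ell(t_k,x_k)=\{q_k\}$ is a singleton with $q_k=D_xu(t_k,x_k)\to q$ and $u_t(t_k,x_k)=-H(q_k)$. Continuity of $H$ then forces $p=\lim(-H(q_k))=-H(q)$. It remains to show $q\in\ell(t_0,x_0)$, which is precisely the closed-graph property furnished by the upper semicontinuity of the multifunction $\ell$ (Remark 2.1), applied to $q_k\in\ell(t_k,x_k)$, $q_k\to q$, $(t_k,x_k)\to(t_0,x_0)$. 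Hence $(p,q)=(-H(q),q)\in\mathcal H(t_0,x_0)$.

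For the reverse inclusion I would argue constructively, and this is where Theorem 3.4 does the real work. Fix $q_0\in\ell(t_0,x_0)$. Since $\ell(t_0,x_0)\subset\sigma_y(\ell^*(t_0,x_0))$, there is $y_0\in\ell^*(t_0,x_0)$ with $\sigma_y(y_0)=q_0$, and the line $\mathcal C:\ x(t)=x_0+(t-t_0)H_p(q_0)$ is the characteristic of type (I) at $(t_0,x_0)$ determined by $q_0$. By Theorem 3.4(ii), $\ell(t,x(t))=\{q_0\}$ for every $t\in[0,t_0)$; in particular $u$ is differentiable at each point $(t,x(t))$, $t<t_0$, with gradient $(-H(q_0),q_0)$. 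Choosing any sequence $t_k\uparrow t_0$ with $t_k<t_0$ and setting $x_k=x(t_k)$, one obtains regular points $(t_k,x_k)\in\Omega\setminus\{(t_0,x_0)\}$ with $(t_k,x_k)\to(t_0,x_0)$ and constant gradient $(-H(q_0),q_0)$, so $(-H(q_0),q_0)\in D^*u(t_0,x_0)$. As $q_0\in\ell(t_0,x_0)$ was arbitrary, $\mathcal H(t_0,x_0)\subset D^*u(t_0,x_0)$.

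I expect the main difficulty to lie in the reverse inclusion, specifically in guaranteeing that $(t_0,x_0)$ can be approached through regular points realizing a prescribed $q_0\in\ell(t_0,x_0)$ as the limiting spatial gradient. This is exactly the content that Theorem 3.4(ii) supplies, since it pins $\ell$ down to the single value $\{q_0\}$ along the entire type (I) characteristic for $t<t_0$. By contrast, the inclusion $D^*u(t_0,x_0)\subset\mathcal H(t_0,x_0)$ is comparatively routine once the upper semicontinuity of $\ell$ and the continuity of $H$ are invoked.
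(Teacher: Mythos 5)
Your proposal is correct and follows essentially the same route as the paper: the inclusion $D^*u(t_0,x_0)\subset\mathcal H(t_0,x_0)$ via upper semicontinuity of $\ell$ and continuity of $H$, and the reverse inclusion by approaching $(t_0,x_0)$ along the type (I) characteristic, where Theorem 3.4(ii) forces $\ell(t,x(t))=\{q_0\}$ and hence a constant gradient $(-H(q_0),q_0)$ for $t<t_0$. Your explicit appeal to $\ell(t_0,x_0)\subset\sigma_y(\ell^*(t_0,x_0))$ to produce the initial point $y_0$ is a small point of extra care that the paper leaves implicit.
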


\begin{proof}

Let $(p_0,q_0)$ be an element of $\mathcal H(t_0,x_0),$ then $p_0=-H(q_0)$ for some $q_0\in \ell(t_0,x_0).$ Let $\mathcal C$ be the characteristic curve of type (I) at $(t_0,x_0)$ defined as in Theorem 3.4. By this theorem, all points $(t,x)\in \mathcal C, \ t\in [0, t_0)$ are regular. Put $t_k=t_0-1/k, $ then $\mathcal C\ni (t_k,x_k) \to (t_0,x_0)$ and $(u_t(t_k,x_k),D_xu(t_k,x_k))=(-H(q_0),q_0)\to (-H(q_0), q_0)\in D^*u(t_0,x_0)$ as $k\to \infty.$  Therefore, $\mathcal H(t_0,x_0)\subset D^*u(t_0,x_0).$

On the other hand, let $(p,q)\in D^*u(t_0,x_0)$ and $(t_k,x_k)_k\subset \Omega\setminus \{(t_0,x_0)\}$ such that $u(t,x)$ is differentiable at $(t_k,x_k)$ and,
$$(t_k,x_k)\to (t,x), \ (u_t(t_k,x_k), D_xu(t_k,x_k))\to (p,q)\  \text{ as}\  k\to \infty.$$
Since \qquad $ (u_t(t_k,x_k), D_xu(t_k,x_k))= (-H(q_k),q_k) \ { \text{for}}\  q_k\in \ell(t_k,x_k),$

\noindent and multivalued function $\ell(t,x)$ is u.s.c, then letting $k\to \infty,$ we see that $q\in \ell(t_0,x_0)$ and $p=\lim_{k\to \infty} -H(q_k) =-H(q).$ Thus $(p,q)\in \mathcal H(t_0,x_0).$ The theorem is then proved.
\end{proof}

\begin{rem} A general result for the correspondence between
$D^*u(t,x)$ and the set of minimizers of $(CV)_{t,x}$ is established
for convex Hamiltonian $H(t,x,p)$ in $p$ in \cite{cs}, Th. 6.4.9,
p.167.
\end{rem}
\section{ Existence of a strip of differentiability of Hopf formula}

First, we present the following result on the existence of  strips of the form $\mathcal R=(0,t_*)\times \R^n\subset \Omega$ such that on which the viscosity solution $u(t,x)$ defined by Hopf formula is continuously differentiable. 

\begin{thm}\label{sc} Assume  (Hf1). Suppose that the Hamiltonian  $H=H(p)$ is a semiconvex function with the semiconvexity constant $\gamma >0.$  In addition, let $\sigma$ be a semiconcave function with semiconcavity constant $\mu^{-1}>0.$ Then there exists $t_*\in (0, T)$ such that for all $t_0\in (0, t_*),$ the function $v(x)=u(t_0,x)$ is semiconcave, where $u(t,x)$ is the Hopf formula  defined by (\ref{2.3}).
 \end{thm}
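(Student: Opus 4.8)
The plan is to rewrite the Hopf formula as a single Fenchel conjugate in the space variable and then transfer the two hypotheses through Proposition \ref{scc}. For fixed $t$ put $g_t(q)=\sigma^*(q)+tH(q)$, so that
$$u(t,x)=\max_{q\in\R^n}\{\langle x,q\rangle-g_t(q)\}=g_t^*(x).$$
Hence $v(x)=u(t_0,x)=g_{t_0}^*(x)$, and the entire statement reduces to determining those $t_0$ for which $g_{t_0}$ is uniformly convex: once this is known, Proposition \ref{scc}(i) instantly gives that the conjugate $g_{t_0}^*=v$ is semiconcave, with the reciprocal constant.

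The second step is to read off the uniform convexity of $g_{t_0}$ from the data. Since $\sigma$ is semiconcave with constant $\mu^{-1}$, Proposition \ref{scc}(ii) shows $\sigma^*$ is uniformly convex with constant $\mu$, i.e. $\sigma^*(q)-\frac{\mu}{2}|q|^2$ is convex. Since $H$ is semiconvex with constant $\gamma$, the function $H(q)+\frac{\gamma}{2}|q|^2$ is convex. I would then combine these via the elementary identity
$$g_{t_0}(q)-\frac{\mu-t_0\gamma}{2}|q|^2=\Big(\sigma^*(q)-\frac{\mu}{2}|q|^2\Big)+t_0\Big(H(q)+\frac{\gamma}{2}|q|^2\Big),$$
whose right-hand side is a sum of a convex function and a nonnegative multiple of a convex function, hence convex. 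Therefore $g_{t_0}$ is uniformly convex with constant $\Lambda=\mu-t_0\gamma$ precisely when this quantity is positive. Setting $t_*=\min\{T,\mu/\gamma\}$ (shrunk slightly, if needed, so that $t_*\in(0,T)$), for every $t_0\in(0,t_*)$ one has $\Lambda=\mu-t_0\gamma>0$, and Proposition \ref{scc}(i) yields that $v(x)=u(t_0,x)$ is semiconcave with semiconcavity constant $(\mu-t_0\gamma)^{-1}$. This displayed identity is the quantitative heart of the argument: it isolates the threshold $t_*=\mu/\gamma$ at which the convex ``reserve'' $\mu$ supplied by $\sigma^*$ is exhausted by the semiconcave ``deficit'' $t\gamma$ coming from $tH$.

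The point requiring care is that $\sigma^*$, and hence $g_{t_0}$, need not be finite on all of $\R^n$; its effective domain is $D=\mathrm{dom}\,\sigma^*$, whereas Proposition \ref{scc} is stated for convex functions $v:\R^n\to\R$. I would handle this by reading uniform convexity as the assertion that $g_{t_0}(\cdot)-\frac{\Lambda}{2}|\cdot|^2$ is convex as an extended-real-valued function, and by noting that the implication ``uniformly convex $\Rightarrow$ conjugate of class $C^{1,1}$, hence semiconcave, with the reciprocal modulus'' persists in this generality, the conjugate $u(t_0,\cdot)$ being finite everywhere by (Hf1). Thus the only genuine obstacle is the bookkeeping of the extended-valued conjugation around the boundary of $D$; the convexity algebra leading to $\Lambda=\mu-t_0\gamma$ is otherwise routine.
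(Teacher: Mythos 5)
Your proof is correct, but it takes a genuinely different route from the paper's. The paper argues directly on midpoints: it picks maximizers $p\in\ell(t_0,x)$, $q\in\ell(t_0,y)$, bounds $u(t_0,x)+u(t_0,y)-2u(t_0,\frac{x+y}{2})$ above by $\varphi(t_0,x,p)+\varphi(t_0,y,q)-2\varphi(t_0,\frac{x+y}{2},\frac{p+q}{2})$, and then absorbs the resulting $|p-q|^2$ terms using the uniform convexity of $\sigma^*$ (via Proposition \ref{scc}), the semiconvexity of $H$, and the Young-type inequality $2\langle x-y,p-q\rangle\le\frac{\mu}{2}|p-q|^2+\frac{2}{\mu}|x-y|^2$; this yields semiconcavity under the threshold $\gamma t_*\le\frac{\mu}{2}$ with constant $\frac{2}{\mu}$. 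You instead write $u(t,\cdot)=g_t^*$ with $g_t=\sigma^*+tH$ and push everything through conjugation: the identity
\[
g_{t_0}(q)-\tfrac{\mu-t_0\gamma}{2}|q|^2=\Bigl(\sigma^*(q)-\tfrac{\mu}{2}|q|^2\Bigr)+t_0\Bigl(H(q)+\tfrac{\gamma}{2}|q|^2\Bigr)
\]
shows $g_{t_0}$ is uniformly convex with constant $\mu-t_0\gamma$, so $v=g_{t_0}^*$ is semiconcave with constant $(\mu-t_0\gamma)^{-1}$. This buys a sharper threshold ($t_0<\mu/\gamma$ rather than $t_0\le\mu/(2\gamma)$), a sharper constant, and complete independence from the maximizer sets $\ell(t_0,x)$. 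The price is exactly the point you flag yourself: Proposition \ref{scc} is stated for finite-valued convex functions on $\R^m$, whereas $g_{t_0}$ is extended-real-valued with effective domain $D=\mathrm{dom}\,\sigma^*$. You therefore need the standard extension that a proper l.s.c.\ uniformly convex function has an everywhere finite, $C^{1,1}$ conjugate with reciprocal modulus; this is true and can be supplied in two lines by writing $g_{t_0}^*=h^*\,\square\,\frac{1}{2\Lambda}|\cdot|^2$ (infimal convolution) with $h=g_{t_0}-\frac{\Lambda}{2}|\cdot|^2$ and checking midpoint semiconcavity of the inf-convolution directly, but it is not literally contained in the paper's Proposition \ref{scc}. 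With that small supplement made explicit, your argument is a complete and in fact quantitatively stronger proof of the theorem.
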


\begin{proof} We follow an argument in the proof of Theorem 3.5.3 (iv) \cite{cs} with an appropriate adjustment. 

By assumption and Prop. \ref{scc}, we first note that the Fenchel conjugate function $\sigma^*$ is a uniformly convex function with constant $\mu>0.$ Therefore the function $\sigma^*(p) -\frac{\mu}2 |p|^2$ is convex and then, for all $a,b\in \R^n$  we have
\begin{equation}\label{sc1} \sigma^*(a) +\sigma^*(b) -2\sigma^*(\frac{a+b}{2}) \ge \frac{\mu}{2} (|a|^2+|b|^2-2|\frac{a+b}{2}|^2)=\frac{\mu}4|a-b|^2.
\end{equation}

 Now, take  $t_*\in (0,T)$ such that $0<\gamma t_*\le \frac \mu 2.$ Let $ t_0\in (0,t_*), x, y\in \R^n,$ pick out $p\in \ell(t_0,x), q\in \ell(t_0,y);$ using the inequality (\ref{sc1}) we have
$$\aligned  u(t_0,x)&+u(t_0,y)-2u(t_0,\frac{x+y}2)\le \langle x,p \rangle -\sigma^*(p)  -t_0H(p) +\langle y,q\rangle -\sigma^*(q)\\
&-t_0H(q)-2\Big( \langle \frac {x+y}2,\frac{p+q}2\rangle -\sigma^*(\frac{p+q}2)-t_0H(\frac{p+q}2)\Big)\\
\le2 \Big( \sigma^*(\frac{p+q}2)&-\frac{\sigma^*(p)+\sigma^*(q)}2\Big)+\frac 12 \langle x-y,p-q\rangle +2t_0(H(\frac{p+q}2)-\frac{H(p)+H(q)}2 \big)\\
&\le -\frac {\mu} 4 |p-q|^2 +\frac 14(2\langle x-y, p-q\rangle)+2t_0(\frac \gamma 8|p-q|^2\\
&\le -\frac {\mu} 4 |p-q|^2 +t_0(\frac \gamma 4|p-q|^2+\frac 14(\frac \mu 2|p-q|^2+\frac 2 \mu|x-y|^2)\\
&\le \frac 14(t_0\gamma -\frac{ \mu} 2) |p-q|^2 +\frac 1{2\mu} |x-y|^2.\endaligned $$

(Above, we use an obvious inequality of the form $2\langle x-y,p-q\rangle \le \frac \mu 2|p-q|^2 +\frac 2 \mu |x-y|^2.)$ 

Therefore,
$$ u(t_0,x) +u(t_0,y)-2u(t_0,\frac{x+y}2)\le  \frac 1{ 2\mu} |x-y|^2.$$

Thus, the function $v(x)=u(t_0,x)$ is a semiconcave function.
\end{proof}

\begin{cor} Suppose that all assumptions of Theorem \ref{sc1} hold. Then $u(t,x)$ defined by Hopf formula is of class $C^1(0,t_*)\times\R^n,$ where $0<\gamma t_*\le \frac {\mu}2.$
\end{cor}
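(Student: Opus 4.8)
The plan is to reduce the desired $C^1$ regularity on the strip to the singleton property of the maximizer set $\ell(t_0,x_0)$, which by Theorem \ref{2.4}(3) is equivalent to differentiability of $u$. Fix an arbitrary point $(t_0,x_0)\in(0,t_*)\times\R^n$ and consider the partial function $v(x)=u(t_0,x)$. By Theorem \ref{sc}, $v$ is semiconcave with semiconcavity constant $\mu^{-1}$, while Theorem \ref{2.4}(1) tells us that $u$, and hence $v$, is convex. The first step is to observe that a function which is simultaneously convex and semiconcave with linear modulus is differentiable at every point: convexity gives $D^-v(x)\neq\emptyset$ everywhere, and for a semiconcave function the nonemptiness of the subdifferential forces differentiability (see \cite{cs}). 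Thus $v$ is differentiable in $x$ at $x_0$, so its convex subdifferential $\partial v(x_0)$ reduces to a single vector.

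The second, and main, step is to translate this partial differentiability into the singleton property of $\ell(t_0,x_0)$. Here I would use that, for $t_0$ fixed, $v(x)=\max_{q}\varphi(t_0,x,q)$ is a pointwise supremum of the affine functions $x\mapsto\langle x,q\rangle-\sigma^*(q)-t_0H(q)$, whose gradient in $x$ is precisely $q$. By the standard formula for the subdifferential of such a supremum (of Danskin type), $\partial v(x_0)=\mathrm{conv}\,\ell(t_0,x_0)$, the closed convex hull of the active indices $q$; the compactness of $\ell(t_0,x_0)$, guaranteed by (Hf1) together with the upper semicontinuity of $(t,x)\mapsto\ell(t,x)$ noted earlier, ensures this hull is closed. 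Since $\partial v(x_0)$ is a singleton, and the convex hull of a nonempty set reduces to a single point only when the set itself is that point, $\ell(t_0,x_0)$ must be a singleton.

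Finally, invoking Theorem \ref{2.4}(3), the singleton property of $\ell(t_0,x_0)$ yields that $u$ is differentiable at $(t_0,x_0)$. As $(t_0,x_0)$ was an arbitrary point of the open strip $(0,t_*)\times\R^n$, the Hopf formula $u$ is differentiable at every point of this open set; since $u$ is convex on all of $\Omega$ by Theorem \ref{2.4}(1), the last assertion of Theorem \ref{2.4}(3), namely that a convex function differentiable throughout an open set is continuously differentiable there, gives $u\in C^1((0,t_*)\times\R^n)$, with the constraint $0<\gamma t_*\le\frac{\mu}{2}$ inherited from Theorem \ref{sc}.

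I expect the main obstacle to be the second step: rigorously justifying the identity $\partial v(x_0)=\mathrm{conv}\,\ell(t_0,x_0)$ for the supremum of an infinite family of affine functions, rather than a finite maximum, since this needs the compactness and upper semicontinuity of $\ell$ near $x_0$ in order to apply the envelope formula and to guarantee that the convex hull is already closed. Once this identity is secured, the remaining implications are immediate, and the interplay between the $x$-partial differentiability of $v$ and the joint differentiability of $u$ is handled entirely by the characterization in Theorem \ref{2.4}(3).
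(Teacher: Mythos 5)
Your proof is correct and follows essentially the same route as the paper: semiconcavity of $v(x)=u(t_0,x)$ from the preceding theorem, combined with convexity, gives differentiability of $v$ in $x$ (the paper cites Theorem 3.3.7 of \cite{cs} for precisely the convex-plus-semiconcave-implies-$C^1$ fact you argue via sub- and superdifferentials), whence $\ell(t_0,x_0)$ is a singleton and Theorem 2.4(3) together with convexity of $u$ finishes. The only place you overcomplicate matters is the step you flag as the main obstacle: the full Danskin identity $\partial v(x_0)=\mathrm{conv}\,\ell(t_0,x_0)$ is not needed, since the elementary inclusion $\ell(t_0,x_0)\subset\partial v(x_0)$ (each maximizer $q$ supplies the affine minorant $x\mapsto v(x_0)+\langle x-x_0,q\rangle$ of $v$) already forces $\ell(t_0,x_0)$ to be a singleton once $\partial v(x_0)$ is.
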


\begin{proof} 
Let $(t_0,x_0)\in (0,t_*)\times \R^n. $ By Theorem \ref{sc1}, the function $v(x)=u(t_0,x)$ is  a semiconcave on $\R^n.$ Moreover, $v(x)$ is also a convex function. By Theorem 3.3.7 \cite{cs}, the function $v(x)=u(t_0,x)$ is of class $C^1(\R^n).$ Thus, $\ell(t_0,x_0)$ is a singleton and then $u(t,x)$ as function of two variables, is differentiable at $(t_0,x_0).$ Moreover, $u(t,x)$ is a convex function, therefore $u(t,x)$ is of class $C^1((0,t_*)\times\R^n).$
\end{proof}

Being inspired by Lemma 6.5.1 \cite{cs} we can derive the following lemma which is useful in studying the differentiability of Hopf formula.

\begin{lem} Assume (Hf1), (Hf2). In addition, suppose that $\sigma(x)$ is Lipschitz on $\R^n.$ Let $(t_0,x_0)\in [0,T).$  Moreover, suppose  that there exist $t_*\in (0,T)$ such that $\ell(t_*,y)=\{p(y)\}$ is a singleton, for all $y\in \R^n.$ Then there exists $x_*\in \R^n$ and a characteristic curve $\mathcal C$ of type (I)  at $(t_*,x_*):  x=x_*+(t-t_*)H_p(p(x_*))$ passing $(t_0,x_0).$
\end{lem}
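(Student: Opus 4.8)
The plan is to convert the assertion into a fixed-point problem and solve it with Brouwer's theorem; the decisive structural point is that the Lipschitz hypothesis on $\sigma$ forces every maximizer into one fixed bounded set, so that the ``correction term'' along a characteristic is uniformly bounded. First I would extract the consequences of the hypotheses. Since $\ell(t_*,y)=\{p(y)\}$ is a singleton for every $y\in\R^n$, Theorem \ref{2.4}(3) shows that $u(t_*,\cdot)$ is differentiable at every point with $D_xu(t_*,y)=p(y)$; as $u(t_*,\cdot)$ is convex, this gradient is continuous, so the map $y\mapsto p(y)$ is continuous. Because $\sigma$ is Lipschitz on $\R^n$ with some constant $L$, its conjugate $\sigma^*$ is $+\infty$ outside $B'(0,L)$, so $D=\mathrm{dom}\,\sigma^*\subset B'(0,L)$ is bounded; since every maximizer lies in $D$ we get $p(y)\in B'(0,L)$ for all $y$. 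By (Hf2) the map $H_p$ is continuous, hence bounded on the compact ball $B'(0,L)$, and I set $K=\max_{|q|\le L}|H_p(q)|$ and $M=|t_0-t_*|\,K$.

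Next I would set up the fixed-point map. A line of type (I) at a candidate point $(t_*,x_*)$ must have the form $x(t)=x_*+(t-t_*)H_p(p(x_*))$, and it passes through $(t_0,x_0)$ exactly when
\begin{equation}\label{plan-star}
x_0=x_*+(t_0-t_*)H_p(p(x_*)).
\end{equation}
Setting $G:\R^n\to\R^n,\ G(x)=x_0-(t_0-t_*)H_p(p(x))$, condition (\ref{plan-star}) is equivalent to $G(x_*)=x_*$. The map $G$ is continuous, and the bound above gives $|G(x)-x_0|\le M$ for all $x$, so $G$ maps the compact convex ball $B'(x_0,M)$ into itself. Brouwer's fixed-point theorem then furnishes a point $x_*\in B'(x_0,M)$ with $G(x_*)=x_*$, that is, a solution of (\ref{plan-star}).

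It then remains to confirm that the associated line is genuinely a characteristic of type (I) at $(t_*,x_*)$. Since $\ell(t_*,x_*)=\{p(x_*)\}$ and $\ell(t_*,x_*)\subset\sigma_y(\ell^*(t_*,x_*))$ (the inclusion recalled in Section 3), there is $y\in\ell^*(t_*,x_*)$ with $\sigma_y(y)=p(x_*)$; hence $\mathcal C:\ x=x_*+(t-t_*)H_p(p(x_*))$ is a bona fide characteristic curve emanating from $y$ and is of type (I) at $(t_*,x_*)$ by definition, and by (\ref{plan-star}) it passes through $(t_0,x_0)$, which is the desired conclusion.

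The step I expect to be the real obstacle is exactly the reduction to a self-map of a ball: it succeeds only because $(t_0-t_*)H_p(p(x_*))$ is uniformly bounded, which is precisely where the Lipschitz assumption on $\sigma$ (via boundedness of $\mathrm{dom}\,\sigma^*$) is used; without it surjectivity could fail. A pleasant feature of this topological argument is that it is insensitive to the sign of $t_0-t_*$, so it covers the cases $t_0<t_*$, $t_0=t_*$ and $t_0>t_*$ at once. In the subcase $t_0\ge t_*$ one could even bypass Brouwer altogether by choosing any $p_0\in\ell(t_0,x_0)$, setting $x_*=x_0+(t_*-t_0)H_p(p_0)$, and invoking the backward propagation in Theorem 3.4(i) to deduce $p_0\in\ell(t_*,x_*)=\{p(x_*)\}$.
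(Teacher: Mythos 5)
Your proof is correct and follows essentially the same route as the paper's: continuity of $y\mapsto p(y)$ from the singleton hypothesis, boundedness of $\mathrm{dom}\,\sigma^*$ from the Lipschitz assumption on $\sigma$, and Brouwer's fixed-point theorem applied to the self-map $x\mapsto x_0-(t_0-t_*)H_p(p(x))$ of a closed ball around $x_0$. Your closing verification that the resulting line is genuinely a type (I) characteristic (via $\ell(t_*,x_*)\subset\sigma_y(\ell^*(t_*,x_*))$) is a welcome extra degree of care that the paper leaves implicit.
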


\begin{proof} 
Following Remark 2.1, the multi-valued function    $y\mapsto \ell(t_*,y)$  is upper semi-continuous. By assumption, $\ell(t_*,y)=\{p(y)\},$ thus the single-valued function $y\mapsto p(y)$ is continuous on $\R^n.$

For all $y\in \R^n,$ let 
$$\Lambda (y)=x_0-(t-t_*) H_p(p(y)),$$ then the function $\Lambda$ is also continuous on $\R^n.$

Since $\sigma(x)$ is convex and Lipschitz, then $D=$\ dom$\sigma^*$ is bounded. Hence, $D\subset B'(0,M)$  for some positive number $M.$ Let $N=(t_*-t_0) \sup_{|p|\le M}\, |H_p(p)|.$
\smallskip

Note that, if $y\in B'(x_0,N)$ then
$$|\Lambda (y)-x_0|\le (t^*-t_0) |H_p(p(y)| \le N.$$
Therefore $\Lambda$ is a continuous function from the closed ball $B'(x_0,N)$ into itself. By Brouwer theorem, $\Lambda $ has a fixed point $x_*\in B'(x_0,N), $ i.e., $\Lambda (x_*)=x_*,$ hence,
$$x_0=x_*+(t_0-t^*)H_p(p(x_*)).$$

In other words, there exists a characteristic curve $\mathcal C$ of the type (I) at $(t_*,x_*)$ described as in Prop. 3.1  passing $(t_0,x_0)$. The lemma is then proved.
\end{proof}

\begin{rem}
By Cauchy method of characteristics and by assumptions that $H$ and $\sigma$ are of class $C^2(\R^n),$  the unique $C^2$-solution $u(t,x)$ of problem (2.1) - (2.2) exists in a narrow neighborhood of the hyperplane $t=0$ where characteristic curves do not meet. Nevertheless, if $u(t,x)$ given by Hopf fomula is differentiable in some open set containing $(t_0,x_0)\in \Omega,$ then several characteristic curves may cross at $(t_0,x_0)$ as in the following example.

Consider the following problem
$$u_ t-\Big(1+|u_x|^2\Big)^{\frac 12} =0,\
t>0,\ x\in \R ,$$
$$u(0,x)=\frac{x^2}{2},\ x\in \R.$$
The Hopf formula of this problem is:
$$u(t,x)=\max_{y\in \R} \{xy-\frac{y^2}{2}+t (1+y^2)^{\frac
12}\}.$$ 

By computing, we recognize that $\ell(t,x)$ is a singleton for all points in the region $\mathcal R^*=((0,+\infty )\times  \R)\setminus \{(t,0),\ t\ \ge 1\}.$
Thus, the solution $u(t,x)$ is continuously differentiable in this region. Using method of characteristics, we see that when $t>1,$
the  characteristic curves intersect. Concretely,
two  curves of the form $x(t,y)=y-\displaystyle\frac{ty}{\sqrt{1+y^2}}$  starting
from $y_0 = 1$ and $y_1 =2$ 
meet each other at  the point $\big(\frac{\sqrt{10}}{2\sqrt 2-\sqrt 5},\
\frac{2(\sqrt 
2-\sqrt 5)}{2\sqrt 2-\sqrt 5}\big )\in \mathcal R^*,$ but the differentiability  of the solution $u(t,x)$ is also preserved in some neighbourhood of this point.

\smallskip
However, if Hopf formula $u(t,x)$ is differentiable on a whole strip of the form $(0,t_0)\times \R^n$ then the situation is different. More specific, we have the following theorem as a necessary condition.
\end{rem}

\begin{thm}\label{cross}
Assume (Hf1), (Hf2). Suppose that $u(t,x)$ is differentiable on a strip $\mathcal R=(0,T_0)\times \R^n, \ T_0<T.$ Then at any point $(t_0,x_0)\in \mathcal R$ there are no characteristic curves crossing each other. 
\end{thm}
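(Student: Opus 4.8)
The plan is to recast the conclusion as a statement about the fibre $\ell^*(t_0,x_0)$. First I would note that two characteristics through $(t_0,x_0)$ emanating from base points $y_1\ne y_2$ must have different velocities: if $H_p(\sigma_y(y_1))=H_p(\sigma_y(y_2))$, then $y_i=x_0-t_0H_p(\sigma_y(y_i))$ forces $y_1=y_2$. Hence distinct base points give two distinct straight lines meeting only at $(t_0,x_0)$, i.e. a genuine crossing, while a single base point gives a single characteristic. Thus the theorem is equivalent to showing that $\ell^*(t_0,x_0)$ is a singleton for every $(t_0,x_0)\in\mathcal R$.

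Next, since $u$ is differentiable on $\mathcal R$, Theorem 2.4(3) gives that $\ell(t,x)$ is a singleton at each point of $\mathcal R$ and that $u\in C^1(\mathcal R)$. Writing $\{p_0\}=\ell(t_0,x_0)$, the point $y_0=x_0-t_0H_p(p_0)$ lies in $\ell^*(t_0,x_0)$ with $\sigma_y(y_0)=p_0$ (Remark 3.2), so the type (I) characteristic always exists, and it is unique because any type (I) characteristic has $\sigma_y(y)=p_0$ and hence base point $y_0$. It therefore suffices to rule out characteristics of type (II) through points of $\mathcal R$. So I would suppose, for contradiction, that a characteristic $\mathcal C: x(t)=y+tH_p(p)$ with $p=\sigma_y(y)\notin\ell(t_0,x_0)$ passes through $(t_0,x_0)\in\mathcal R$. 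Following the device in the proof of Theorem 3.4, for each $q$ set $\eta(t,q)=\varphi(t,x(t),q)-\varphi(t,x(t),p)$; this is affine in $t$ with slope $c(q)=\langle H_p(p),q-p\rangle-(H(q)-H(p))$, and $\eta(0,q)=\langle y,q-p\rangle-(\sigma^*(q)-\sigma^*(p))\le 0$ because $y\in\partial\sigma^*(p)$. Then $p\in\ell(t,x(t))$ exactly when $\eta(t,q)\le0$ for all $q$, and since $\sup_q\eta(t,q)$ is convex in $t$, vanishes at $t=0$, and is nonnegative, the set of such $t$ is a closed interval $[0,t^*]$; type (II) at $(t_0,x_0)$ means $t^*<t_0$. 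At the transition point $P^*=(t^*,x(t^*))$ one still has $p\in\ell(P^*)$, and $P^*\in\mathcal R$ whenever $t^*>0$.

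The heart of the matter is the claim that $P^*$ is a singular point, which contradicts the differentiability of $u$ on $\mathcal R$. For $t$ slightly larger than $t^*$ there are maximizers $q_t\in\ell(t,x(t))$ with $c(q_t)>0$ (otherwise $\eta(t,q_t)\le\eta(0,q_t)\le0$), and these stay bounded by (Hf1); extracting a limit $q^*$ as $t\downarrow t^*$ and using the upper semicontinuity of $\ell$ yields $q^*\in\ell(P^*)$. If $q^*\ne p$, this exhibits two distinct maximizers at $P^*$, so $P^*$ is singular, giving the desired contradiction since $P^*\in\mathcal R$.

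The hard part will be exactly the degenerate possibility $q_t\to p$, where first-order data are inconclusive and one must argue that type (I) cannot terminate at a regular point. Heuristically, if $\sigma^*+t^*H$ is nondegenerate at $p$ then $\eta(t^*,q)\lesssim-|q-p|^2$ near $p$ while $c(q)=O(|q-p|^2)$ (as $c(p)=0$ and $\nabla_q c(p)=0$), which forces $\eta(t,\cdot)\le0$ for $t$ slightly beyond $t^*$ and contradicts the definition of $t^*$; making this robust with only $C^1$ data is delicate. One must also treat the boundary case $t^*=0$ separately, where $p\notin\ell(t,x(t))$ for all $t\in(0,t_0]$ yet $\ell(t,x(t))\to\{p\}$ as $t\downarrow0$, so the same degenerate behaviour reappears but at the initial trace rather than inside $\mathcal R$. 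It is precisely here that the global hypothesis $u\in C^1$ on the \emph{whole} strip is essential, for the Remark shows that at an isolated regular point crossings genuinely occur.
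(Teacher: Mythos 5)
Your reduction of the statement to ruling out type (II) characteristics through points of $\mathcal R$ is sound and matches the opening of the paper's argument, and introducing the transition time $t^*$ along a putative type (II) characteristic is the natural first move. But the proof does not close: the decisive step is to show that the transition point $P^*=(t^*,x(t^*))$ is singular, and your argument for this only succeeds when the maximizers $q_t\in\ell(t,x(t))$ for $t\downarrow t^*$ have a limit point $q^*\ne p$. In the degenerate case $q_t\to p$ you have no argument: the heuristic you sketch requires second-order nondegeneracy of $\sigma^*+t^*H$ at $p$, which is unavailable under (Hf2) (only $C^1$ data), and the boundary case $t^*=0$ is likewise left open. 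Since you explicitly flag both of these points as unresolved, the proposal is an incomplete strategy rather than a proof; moreover, no purely local analysis along the single curve $\mathcal C$ near $P^*$ can work, because it never uses the differentiability of $u$ at times \emph{after} $t_0$, and the example in Remark 4.4 shows that without that global hypothesis characteristics can genuinely cross at a regular point.

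The missing ingredient is exactly the device the paper uses to import the hypothesis for times beyond $t_0$: choose $t_*\in(t_0,T_0)$, so that $\ell(t_*,x)$ is a singleton $\{p(x)\}$ for every $x\in\R^n$, and apply Lemma 4.3 (a Brouwer fixed-point argument on the slice $t=t_*$, using boundedness of ${\rm dom}\,\sigma^*$) to produce a point $x_*$ and a characteristic $\mathcal C'$ of type (I) at $(t_*,x_*)$ passing through $(t_0,x_0)$. Theorem 3.4 then forces $\ell(t_0,x_0)=\{p(x_*)\}$ and makes every point of $\mathcal C'$ below $t_*$ regular; the paper concludes by comparing the offending characteristic $\mathcal C_1$ with $\mathcal C'$: either the two momenta coincide, so $\mathcal C_1=\mathcal C'$ is of type (I) at $(t_0,x_0)$, contradicting its type (II) status, or they differ and one exhibits two distinct reachable gradients at the transition point $(t_+,x_+)$, contradicting differentiability there. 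This ``descend from a regular slice above $t_0$'' construction is what replaces your problematic analysis at $P^*$; it supplies the second maximizer that your limit argument cannot produce in the degenerate case. If you want to salvage your outline, the repair is to prove and invoke Lemma 4.3 at $t_*>t_0$ rather than trying to detect singularity of $P^*$ from the behaviour of $\eta(t,\cdot)$ alone.
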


\begin{proof}
On the contrary, suppose that there are two distinct characteristic curves  $\mathcal C_i: x=y_i+tH_p(\sigma_y(y_i)), \ i=1,2,\ y_1\ne y_2$ meet at  $(t_0,x_0).$ If both $\mathcal C_i, i=1,2$ are of type (I) at $(t_0,x_0)$, then $\{p_1, p_2\}\subset \ell(t_0,x_0),$ where $p_1=\sigma_y(y_1)\ne \sigma_y(y_2)=p_2.$ This means that $u(t,x)$ is not differentiable at this point. Therefore, at least a $\mathcal C_i, i=1,2,$ say, $\mathcal C_1$ is of type (II) at $(t_0,x_0).$ Let
$$t_+=\inf \{t\in [0,t_0)\ |\ \mathcal C_1 \ {\text{is of type (II) at} }\  (t, x_1(t)\}.$$
Consider the point $(t_+, x_+)$ where $x_+=x_1(t_+)$ then $p(x_1)=p(x_+).$ Take $t_*\in (t_0, T_0).$ By assumption, $\ell(t_*, x)$ is a singleton for all $x\in \R^n.$ Applying Lemma 4.3, there exists a point $(t_*, x_*)\in\mathcal R$ and a characteristic curve $\mathcal C': x=x_*+( t-t_*)H_p(p(x_*))$ of type (I) at $(t_*,x_*)$ and passing $(t_0,x_0).$ 

+ If $p(x_1) =p(x_*)$ then $\mathcal C_1=\mathcal C'.$ By Theorem 3.4, the characteristic curve $\mathcal C_1$ is of type (I) at $(t_0,x_0).$This is a contradiction. 

+ If $p(x_1)\ne p(x_*)$  (i.e. $\mathcal C_1\ne \mathcal C',$  and $0<t_+\le t_0)$ then $\mathcal C_1$ is of type (I) at all points $(t,x_1(t)),\ 0\le t<t_+.$ Thus, $\{(-H(p(x_+)),p(x_+)), (-H(p(x_*)), p(x_*))\}\subset D^*u(t_+, x_+).$ It follows that $u(t,x)$ is not differentiable at $(t_+, x_+).$ This also contradicts to the hypothesis of the theorem.

The proof Theorem \ref{cross} is now complete.
\end{proof}

Next, we present some sufficient conditions so that there exists a strip of the form $(0,t_*)\times \R^n$ on which the function $u(t,x)$ is differentiable. The first result is concerned with non-crossing characteristics conditions, i.e. $\ell^*(t_*,x)$ is a singleton.

\begin{thm} Assume (Hf1), (Hf2). Let $u(t,x)$ be the viscosity solution of Problem (\ref{2.1}) - (\ref{2.2}) defined by Hopf formula (2.3). Suppose that there exists $t_* \in (0,T)$ such that the mapping: $y\mapsto x(t_*,y)=y+{t_*} H_p(\sigma_y (y))$ is injective. Then $u(t,x)$ is continuously differentiable in the open strip $(0,t_*)\times \R^n.$
\end{thm}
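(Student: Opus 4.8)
The plan is to reduce everything to a pointwise statement: I will show that $\ell(t_0,x_0)$ is a singleton at every point $(t_0,x_0)$ of the open strip $(0,t_*)\times\R^n$. Once this is in hand, part 3) of Theorem \ref{2.4} immediately gives that $u$ is differentiable at each such point and, since $u$ is convex, of class $C^1$ on the whole strip. Thus the injectivity hypothesis will be used only to transport regularity from the single time level $t=t_*$ down to all earlier times $t_0<t_*$.

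First I would record what injectivity of $y\mapsto x(t_*,y)$ buys us at the level $t=t_*$. Since $x(t_*,\cdot)$ is injective, for each $\xi\in\R^n$ the set $\ell^*(t_*,\xi)$ of initial points whose characteristic reaches $(t_*,\xi)$ contains at most one element; as $\ell^*(t_*,\xi)\ne\emptyset$ always holds, it is in fact a singleton $\{y_\xi\}$. Combining the inclusion $\ell(t_*,\xi)\subset\sigma_y(\ell^*(t_*,\xi))=\{\sigma_y(y_\xi)\}$ with $\ell(t_*,\xi)\ne\emptyset$, I conclude that $\ell(t_*,\xi)=\{\sigma_y(y_\xi)\}$ is a singleton for every $\xi$. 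In particular, each characteristic $\mathcal C_y:\ x=y+tH_p(\sigma_y(y))$ reaches the point $(t_*,x(t_*,y))$ with $\sigma_y(y)\in\ell(t_*,x(t_*,y))$, i.e. $\mathcal C_y$ is of type (I) at $(t_*,x(t_*,y))$.

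The second step is a forward-transport argument that avoids any fixed-point theorem. Fix $(t_0,x_0)$ with $0<t_0<t_*$. Because $\ell^*(t_0,x_0)\ne\emptyset$, there is $y_0$ with $x(t_0,y_0)=x_0$, so the characteristic $\mathcal C_{y_0}$ passes through $(t_0,x_0)$ and, continued to time $t_*$, through $(t_*,x(t_*,y_0))$. By the previous step $\mathcal C_{y_0}$ is of type (I) at $(t_*,x(t_*,y_0))$ with $p_0=\sigma_y(y_0)$, so Theorem 3.4(ii) applies with $t_*$ in the role of $t_0$: every point of $\mathcal C_{y_0}$ with time coordinate in $[0,t_*)$ is regular, with $\ell=\{\sigma_y(y_0)\}$. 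As $t_0<t_*$, this yields $\ell(t_0,x_0)=\{\sigma_y(y_0)\}$, a singleton, which is exactly what was needed.

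The point I expect to require the most care is the passage from ``injective'' to ``$\ell(t_*,\cdot)$ is a singleton'': it rests on feeding the nonemptiness $\ell^*(t_*,\xi)\ne\emptyset$ together with injectivity through the inclusion $\ell\subset\sigma_y(\ell^*)$; everything afterwards is a direct application of Theorem 3.4. It is worth noting that this forward route deliberately sidesteps the Brouwer fixed-point construction of Lemma 4.3: surjectivity of $y\mapsto x(t_0,y)$ for each fixed $t_0<t_*$ already comes for free from $\ell^*(t_0,x_0)\ne\emptyset$, so no boundedness of $\mathrm{dom}\,\sigma^*$ (and hence no extra Lipschitz hypothesis on $\sigma$) is needed to locate a characteristic through $(t_0,x_0)$.
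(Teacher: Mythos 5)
Your proof is correct and follows essentially the same route as the paper: extend a characteristic through $(t_0,x_0)$ forward to the time level $t_*$, use injectivity of $y\mapsto x(t_*,y)$ together with $\ell\subset\sigma_y(\ell^*)$ and nonemptiness to see that the characteristic is of type (I) at that level, and then invoke Theorem 3.4(ii) to conclude $\ell(t_0,x_0)$ is a singleton. The only (harmless) difference is that you start from an arbitrary element of $\ell^*(t_0,x_0)$ rather than from a type (I) characteristic at $(t_0,x_0)$ as the paper does, and you are right that no Brouwer fixed-point argument (Lemma 4.3) is needed here.
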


\begin{proof}

Let $(t_0,x_0)\in (0,t_*)\times \R^n$ and let $\mathcal C:$
$$ x=x_0+(t-t_0) H_p(p_0)$$
where $p_0=\sigma_y(y_0)\in \ell(t_0,x_0),$  be the characteristic curve going through $(t_0,x_0)$ defined as in Proposition 3.1.

Let $(t_*,x_*)$ be the intersection point of $\mathcal C$ and plane $P^{t_*} :\ t=t_*.$ By assumption, the mapping $y\mapsto x(t_*,y)$ is injective and $\ell(t_*,x_*)\ne \emptyset,$ so there is a unique characteristic curve passing $(t_*,x_*).$ This characteristic curve is exactly $\mathcal C.$ Therefore, we can rewrite $\mathcal  C$ as follows:
$$ x=x_*+(t-t^*)H_p(p_*)$$
where $p^*\in \ell(t_*,x_*).$

Since $\ell^*(t_*,x_*)$ is a singleton, so is $\ell(t_*,x_*).$
Consequently, $\mathcal  C$ is of type (I) at $(t^*,x^*)$ and $\ell(t,x)
=\{p^*\}$ for all $(t,x)\in \mathcal  C,$  particularly at $(t_0,x_0)$ and
then, $p^*=p_0.$ Applying Theorem 2.4 we see that $u(t,x)$ is of
class $C^1((0,t_*)\times \R^n).$
\end{proof}

The next theorem concerns with the single-valuedness of the set of maximizers $\ell(t_*,x).$

\begin{thm} Assume (Hf1), (Hf2). In addition, suppose that $\sigma(x)$ is Lipschitz on $\R^n.$ If $\ell(t_*,x)$ is a singleton for every point of the plane $P^{t_*} = \{(t_*,x)\in \R^{n+1}:\  x\in \R^n\},$ for some $t_*\in (0,T),$ then the function $u(t,x)$ defined
by Hopf formula (\ref{2.3}) is continuously differentiable in the open strip
 $(0,t_*) \times \R^n.$\end{thm}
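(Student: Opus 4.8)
The plan is to reduce the $C^1$ regularity to a pointwise single-valuedness statement: I would show that $\ell(t_0,x_0)$ is a singleton at every $(t_0,x_0)$ in the open strip $(0,t_*)\times\R^n$. By part 3) of Theorem 2.4 this single-valuedness forces $u$ to be differentiable at each such point, and since the set on which $\ell$ is single-valued is then the whole (open) strip, the same theorem immediately upgrades differentiability to $u\in C^1((0,t_*)\times\R^n)$. So once the pointwise claim is secured, the conclusion is automatic, and all the real work lies in two results already available.

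First I would fix an arbitrary point $(t_0,x_0)$ with $0<t_0<t_*$. The hypothesis that $\ell(t_*,y)=\{p(y)\}$ is a singleton for every $y\in\R^n$, together with the Lipschitz assumption on $\sigma$ and (Hf1), (Hf2), is exactly the standing setup of Lemma 4.3. Applying that lemma produces a point $x_*\in\R^n$ and a characteristic curve $\mathcal C:\ x=x_*+(t-t_*)H_p(p(x_*))$ which is of type (I) at $(t_*,x_*)$ and passes through $(t_0,x_0)$. In particular $p(x_*)\in\ell(t_*,x_*)$, so $\mathcal C$ satisfies the hypotheses of Theorem 3.4 with $(t_*,x_*)$ serving as the base point.

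The decisive step is then to invoke part (ii) of Theorem 3.4: along a type (I) characteristic, at every time strictly below the base time the maximizer set is a singleton equal to the base gradient. Since $(t_0,x_0)\in\mathcal C$ and $t_0<t_*$, this yields $\ell(t_0,x_0)=\{p(x_*)\}$. As $(t_0,x_0)$ was an arbitrary interior point, single-valuedness of $\ell$ holds throughout $(0,t_*)\times\R^n$, and the reduction of the first paragraph finishes the argument.

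I do not anticipate a genuine obstacle here, since the two nontrivial ingredients are already in place: the Brouwer fixed-point construction that threads a type (I) characteristic through any interior point (Lemma 4.3, which uses the boundedness of $\mathrm{dom}\,\sigma^*$ coming from the Lipschitz hypothesis on $\sigma$), and the propagation of single-valuedness backward along type (I) characteristics (Theorem 3.4, which rests on the essential strict convexity of $\sigma^*$). The one point demanding care is the index bookkeeping: Theorem 3.4 is stated with its base point at the \emph{larger} time, so one must apply it with base point $(t_*,x_*)$ and evaluation point $(t_0,x_0)$, and use the strict inequality $t_0<t_*$ to land in case (ii) (singleton) rather than merely case (i) (inclusion). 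This is the same scheme as in the preceding theorem, with injectivity of the map $y\mapsto x(t_*,y)$ replaced by the single-valuedness of $\ell(t_*,\cdot)$.
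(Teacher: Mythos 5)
Your proposal is correct and follows essentially the same route as the paper: apply Lemma 4.3 to thread a type (I) characteristic based at $(t_*,x_*)$ through the arbitrary point $(t_0,x_0)$, use Theorem 3.4 (with $t_0<t_*$) to conclude $\ell(t_0,x_0)$ is a singleton, and finish with Theorem 2.4, part 3). The paper compresses the middle step into the phrase ``since $\ell(t_*,x_*)$ is a singleton, so is $\ell(t_0,x_0)$,'' which is exactly the appeal to Theorem 3.4 you make explicit.
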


\begin{proof} Let $(t_0,x_0) \in (0,t_*)\times \R^n.$
By Lemma 4.3 there exists a characteristic curve $\mathcal C$ of the type (I) at $(t_*,x_*)$ passing $(t_0,x_0)$. Since $\ell(t_*,x_*)$ is a singleton, so is $\ell(t_0,x_0).$ Applying Theorem 2.4, we see that $u(t,x)$ is continuously differentiable in $(0,t_*)\times\R^n.$
\end{proof}

We note that the hypotheses of above theorems are equivalent to the
fact that, there is unique characteristic curve of type (I) at a regular
point $(t_*,x),\ x\in \R^n$ going through the point $(t_0,x_0),$ thus this point is also regular. In general,
at some point $(t_0,x_0) \in (0,t_*)\times \R^n$ where $u(t,x)$ is
differentiable there may be more than one characteristic curves of
type (I) or (II) at such a point $(t_*,x),\ x\in \R^n,$ passing, that is
$\ell^*(t_*,x)$ need not be a singleton. Even neither is
$\ell(t_*,x),$ see Remark 4.4. Nevertheless, we have:

\begin{thm} Assume (Hf1), (Hf2). Let $u(t,x)$ be the viscosity solution of Problem (\ref{2.1}) - (\ref{2.2}) defined by Hopf-type formula. Suppose that there exists $t_* \in (0,T)$ such that all characteristic curves passing $(t_*,x),\ x\in \R^n$ are of type (I). Then $u(t,x)$ is continuously differentiable in the open strip $(0,t_*)\times \R^n.$
\end{thm}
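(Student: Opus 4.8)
The plan is to reduce the statement to the hypothesis of the previous theorem (Theorem on single-valuedness of $\ell(t_*,x)$), namely that $\ell(t_*,x)$ is a singleton for every $x\in\R^n$; once that is established, continuous differentiability on $(0,t_*)\times\R^n$ follows immediately. So the real work is to show that if every characteristic curve passing through a point $(t_*,x)$ is of type (I), then $\ell(t_*,x)$ cannot contain two distinct points.

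First I would fix an arbitrary $x\in\R^n$ and argue by contradiction: suppose $\ell(t_*,x)$ contains two distinct maximizers $p_1\ne p_2$. By Remark 3.2, each $p_i=\sigma_y(y_i)$ corresponds to a point $y_i\in\partial\sigma^*(p_i)$ with $y_i=x-t_*H_p(p_i)$, and the characteristic curve $\mathcal C_i:\ x=x(t)=x_0+(t-t_*)H_p(p_i)$ emanating from $y_i$ passes through $(t_*,x)$. Because $p_i\in\ell(t_*,x)$, each $\mathcal C_i$ is by definition of type (I) at $(t_*,x)$, which is consistent with the hypothesis but not yet contradictory. The point of the hypothesis is stronger: it forces \emph{every} characteristic through $(t_*,x)$ to be of type (I), i.e.\ $\sigma_y(\ell^*(t_*,x))\subset\ell(t_*,x)$, so that no type (II) branch can appear at the later time to ``rescue'' the differentiability. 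I would then apply Theorem 3.4(i): since $\mathcal C_1$ is of type (I) at $(t_*,x)$, for every $(t,\tilde x)\in\mathcal C_1$ with $0\le t\le t_*$ we have $p_1\in\ell(t,\tilde x)$ and $\ell(t,\tilde x)\subset\ell(t_*,x)$; and by part (ii), for $t<t_*$ the set $\ell(t,\tilde x)=\{p_1\}$ is already a singleton. The same holds along $\mathcal C_2$ with $p_2$.

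The contradiction I would extract runs as follows. Take $t<t_*$ and trace both curves backward; along $\mathcal C_1$ we reach a regular point with gradient $(-H(p_1),p_1)$, and along $\mathcal C_2$ a regular point with gradient $(-H(p_2),p_2)$. Letting $t\uparrow t_*$ and using that the multivalued map $\ell$ is upper semicontinuous (Remark 2.1) together with Theorem 3.5 identifying $D^*u(t_*,x)=\mathcal H(t_*,x)$, both reachable gradients $(-H(p_1),p_1)$ and $(-H(p_2),p_2)$ lie in $D^*u(t_*,x)$. Since $p_1\ne p_2$, the set $D^*u(t_*,x)$ contains two distinct elements, so $u$ is not differentiable at $(t_*,x)$, i.e.\ $\ell(t_*,x)$ is not a singleton. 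This is precisely the negation of what two coexisting type (I) characteristics are supposed to give under the hypothesis. The cleaner route, which I expect to be the intended one, is to invoke Theorem 4.5 (the crossing theorem) contrapositively: two distinct type (I) characteristics through $(t_*,x)$ force two distinct maximizers $p_1,p_2\in\ell(t_*,x)$, hence $u$ fails to be differentiable there, contradicting that all characteristics through $(t_*,x)$ are of type (I), since a type (I) point is by definition a point where the corresponding $p_i$ lies in $\ell$ and the earlier reasoning forces these to collapse.

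The main obstacle, and the step I would take most care over, is justifying why the hypothesis ``all characteristics through $(t_*,x)$ are of type (I)'' actually \emph{prevents} two distinct maximizers rather than merely permitting them. The subtlety is that two distinct $p_1,p_2\in\ell(t_*,x)$ automatically yield two type (I) characteristics, so type-(I)-ness alone does not forbid multiplicity; what forbids it is that the hypothesis rules out type (II) characteristics, and I must show that when $\ell(t_*,x)$ is non-singleton the \emph{only} way to have all characteristics be type (I) is impossible because the backward propagation in Theorem 3.4 forces each $p_i$ to persist as the unique maximizer along its own curve for $t<t_*$, producing genuinely distinct reachable gradients and hence a singular point. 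I would therefore state the argument as: non-singleton $\ell(t_*,x)$ contradicts the differentiability that the type-(I) hypothesis (via Theorem 3.4(ii) and Theorem 3.5) guarantees along each incoming branch, forcing $\ell(t_*,x)=\{p(x)\}$ for all $x$, after which the previous theorem closes the proof.
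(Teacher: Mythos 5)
Your reduction to the previous theorem cannot work, and the gap is exactly at the step you yourself flag as the ``main obstacle.'' You try to show that the hypothesis forces $\ell(t_*,x)$ to be a singleton for every $x\in\R^n$, by arguing that two distinct maximizers $p_1\ne p_2$ in $\ell(t_*,x)$ would produce two distinct reachable gradients and hence make $(t_*,x)$ a singular point. But that is not a contradiction with anything assumed: the hypothesis only says that every characteristic through $(t_*,x)$ is of type (I); it does \emph{not} say that $u$ is differentiable at $(t_*,x)$. Indeed the paper introduces this theorem precisely for the situation, described in the paragraph immediately preceding it, where neither $\ell^*(t_*,x)$ nor $\ell(t_*,x)$ need be a singleton. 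Theorem 3.4(ii) gives regularity along each incoming type (I) branch only for $t<t_*$, strictly below the plane $t=t_*$, so ``the differentiability that the type-(I) hypothesis guarantees along each incoming branch'' says nothing about the point $(t_*,x)$ itself, and your closing sentence (``non-singleton $\ell(t_*,x)$ contradicts \dots, forcing $\ell(t_*,x)=\{p(x)\}$'') is circular. The statement you are trying to establish as an intermediate step is simply not a consequence of the hypothesis.

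The paper's proof sidesteps this entirely and is much shorter: fix $(t_0,x_0)$ with $0<t_0<t_*$, take a characteristic $\mathcal C:\ x=x_0+(t-t_0)H_p(p_0)$ with $p_0\in\ell(t_0,x_0)$ (it exists since $\ell(t_0,x_0)\ne\emptyset$), and extend it \emph{forward} to its intersection $(t_*,x_*)$ with the plane $t=t_*$, rewriting it as $x=x_*+(t-t_*)H_p(p_0)$. By hypothesis $\mathcal C$ is of type (I) at $(t_*,x_*)$, so Theorem 3.4(ii), applied with base point $(t_*,x_*)$, yields that $\ell(t_0,x_0)$ is a singleton because $t_0<t_*$; Theorem 2.4 then finishes the proof. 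No claim about single-valuedness of $\ell(t_*,\cdot)$ is needed or available. To repair your argument, replace the appeal to the previous theorem by this forward-extension step.
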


\begin{proof}

We argue similarly to the proof of Theorem 4.1. Let $(t_0,x_0)\in (0,t_*)\times \R^n$ and let $\mathcal C:$
$$ x=x_0+(t-t_0)H_p(p_0)$$
where $p_0=\sigma_y(y_0)\in \ell(t_0,x_0)$  be the characteristic curve going through $(t_0,x_0)$ defined as in Proposition 3.1.

Let $(t_*,x_*)$ be the intersection point of $\mathcal C$ and plane $P^{t_*} :\ t=t_*.$  Then we have
$$ x_*=x_0+(t^*-t_0) H_p(p_0)$$
Therefore, we can rewrite $\mathcal C$ as
$$ x=x_*-(t^*-t_0)H_p(p_0)  + (t-t_0) H_p(p_0) = x_*+(t-t^*) H_p(p_0),$$
then it is also a characteristic curve passing$(t_*,x_*).$ By assumption, $\mathcal C$ is of type (I) at this point, so all $(t,x)\in \mathcal C, \ 0\le t <t_*$ are regular by Theorem 3.4. Thus, $\ell(t_0,x_0)$ is a singleton. As before, we come to the conclusion of the theorem.
\end{proof}

\section{Propagation of Singularities of Hopf formula}

In the previous section we see that, under some conditions, the Hopf formula is continuously differentiable on a strip of the form $(0,t_0))\times \R^n.$ Let
$$\theta =\sup\{t\in (0,T)\ |\ u(t,x)\in C^1((0,t)\times\R^n)\}.$$

Then $\mathcal R_\theta =(0,\theta)\times \R^n$ is the largest strip on which $u(t,x)$ is continuously differentiable. Therefore, for any $t_0>\theta,$ there exists $x_0\in \R^n$ such that $u(t,x)$ is not differentiable at the point $(t_0,x_0),$ i.e. $(t_0,x_0)$ is a singular point of $u(t,x).$

\smallskip

Next, we study the propagation of singularities of viscosity solution $u(t,x)$ of the Cauchy problem (2.1)-(2.2). The first theorem presents a simple propagation of singularities from a point $(t_0,x_0)$ to the boundary as $t=T$  on a ``cone'' with vertex $(t_0,x_0)$ and base $B'(x_0, L)$ for some $L>0.$ The second one establishes the propagation of singularities on some Lipschitz curve.

\medskip

We start with the following lemma.

\begin{lem}
Assume (Hf1), (Hf2). Moreover, let $\sigma (x)$  be a Lipschitz function on $\R^n.$ Then for each $\epsilon >0,$ there exists $\delta >0$ such that if $(t_0,x_0)$ is a singular point for $u(t,x),$ then
for any $t_1\in [t_0,t_0+\delta]$ there exists $x_1\in B'(x_0,\epsilon)$ such that $(t_1,x_1)$ is also a singular point.
\end{lem}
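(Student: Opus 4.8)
The plan is to exploit the convexity of the Hopf formula through Fenchel duality. Writing $\psi_t=\sigma^*+tH$ with effective domain $D=\mathrm{dom}\,\sigma^*$, the formula (2.3) reads $u(t,x)=\sup_{q}\{\langle x,q\rangle-\psi_t(q)\}$, so that $q\in\ell(t,x)$ precisely when the affine map $q\mapsto\langle x,q\rangle-u(t,x)$ supports $\psi_t$ from below with contact at $q$. Since $\sigma$ is Lipschitz, $D$ is bounded; since $\sigma\in C^1(\R^n)$ by (Hf2), the conjugate $\sigma^*$ is essentially strictly convex on $D$ by $(\cite{ro},\text{ Thm. }26.3)$; I record the uniform bounds $|H(q)|\le C_0$ and $|H_p(q)|\le C_1$ for $q\in\overline D$. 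As $(t_0,x_0)$ is singular, Theorem 2.4 gives two distinct maximizers $p_1,p_2\in\ell(t_0,x_0)$, and the affine function $a(q)=\langle x_0,q\rangle-u(t_0,x_0)$ satisfies $a\le\psi_{t_0}$ with $a(p_i)=\psi_{t_0}(p_i)$. Hence $e:=\psi_{t_0}-a\ge0$ vanishes at $p_1,p_2$; choosing (when possible) the pair so that no maximizer lies strictly between them, strict convexity of $\sigma^*$ yields a positive \emph{barrier} $c_0>0$, i.e. $e$ stays bounded below away from $p_1,p_2$ while having two ``valleys'' around $p_1$ and $p_2$.

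Next I fix $t_1\in[t_0,t_0+\delta]$ and write $\psi_{t_1}-a=e+(t_1-t_0)H$, which differs from $e$ by at most $\delta C_0$ on $\overline D$. Choosing $\delta$ with $\delta C_0<c_0/2$ keeps the two valleys of $\psi_{t_1}-a$ strictly below the separating barrier. I would then pass to the convex envelope $\psi_{t_1}^{**}$ and argue, via the convex hull of its epigraph, that the surviving barrier forces an affine face of dimension $\ge1$ whose relative interior lies over the segment joining the two valleys. The supporting hyperplane along this face has a slope $x_1$, and it touches the graph of $\psi_{t_1}$ at two points $q_1,q_2$, one in each valley; equivalently $q_1,q_2\in\ell(t_1,x_1)$, so $(t_1,x_1)$ is singular by Theorem 2.4. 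The slope of the face is controlled by the difference of $\psi_{t_1}$ across the valleys, so $|x_1-x_0|$ is of order $(t_1-t_0)\,C_1$; shrinking $\delta$ further, so that $\delta C_1$ is suitably smaller than $\epsilon$, places $x_1\in B'(x_0,\epsilon)$.

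The genuinely delicate point, and the main obstacle, is that $\delta$ must be chosen independently of the singular point $(t_0,x_0)$, whereas the barrier height $c_0$ a priori depends on it and may degenerate when $p_1,p_2$ are close. The favorable feature is that the localization estimate $|x_1-x_0|\lesssim(t_1-t_0)\,C_1$ already uses only the uniform bounds $C_0,C_1$ on the compact set $\overline D$; so I would secure the persistence of a double contact for \emph{all} such $t_1$ by a compactness argument on $\overline D\times\overline D$ combined with the upper semicontinuity of $\ell$ from Remark 2.1, replacing the pointwise barrier by a uniform one. A separate subtlety is the degenerate case in which $\ell(t_0,x_0)$ contains a whole nondegenerate segment (so $\sigma^*+t_0H$ is affine there, which is possible precisely because $H$ is nonconvex); there the barrier vanishes and one must instead analyze the perturbation $(t_1-t_0)H$ directly on that segment to locate the contact points. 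The remaining step, the convex-hull/face construction producing the affine piece of $\psi_{t_1}^{**}$ in any dimension, is standard convex analysis once the barrier is fixed and carries the bulk of the routine computation.
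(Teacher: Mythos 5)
Your route is genuinely different from the paper's, but it has a gap that you yourself flag and do not close: the uniformity of $\delta$. The lemma quantifies $\delta$ before the singular point, and Theorem 5.2 iterates the lemma in steps of length $\delta$, so a point-dependent $\delta$ would not suffice. Your barrier constant $c_0$ is the minimum of $\psi_{t_0}-a$ on a separating region between two maximizers $p_1,p_2$, and this degenerates as $p_1,p_2$ coalesce or as the singular point approaches the regular set; the set of singular points is not compact and the map to $(p_1,p_2)$ is only upper semicontinuous, so a compactness argument on $\overline D\times\overline D$ cannot produce a uniform positive lower bound for $c_0$ --- the infimum of the barrier over all singular points is genuinely $0$ in general. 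On top of this, the case where $\ell(t_0,x_0)$ contains a nondegenerate segment (so there is no barrier at all), the existence of an affine face of $\psi_{t_1}^{**}$ with one contact point \emph{in each} valley (the bridging face could skip a valley), and the estimate $|x_1-x_0|\lesssim (t_1-t_0)C_1$ for the slope of that face are all asserted rather than proved. So as written the argument does not establish the statement.

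The paper avoids every one of these issues by arguing by contradiction with a fixed-point argument rather than by constructing the new singular point. Fix $\delta$ with $\delta\,\sup_{|p|\le M}|H_p(p)|\le\epsilon$, where $D=\mathrm{dom}\,\sigma^*\subset B'(0,M)$ (this is where Lipschitzness of $\sigma$ enters, exactly your bound $C_1$ --- note $\delta$ depends only on $\epsilon$, $M$ and $H$). If some $t_*\in[t_0,t_0+\delta]$ had \emph{no} singular point in $B'(x_0,\epsilon)$, then $\ell(t_*,y)=\{p(y)\}$ would be single-valued, hence continuous by upper semicontinuity (Remark 2.1), on that ball; the map $\Lambda(y)=x_0-(t_0-t_*)H_p(p(y))$ sends $B'(x_0,\epsilon)$ into itself, and Brouwer's theorem gives a fixed point $x_*$, i.e.\ a characteristic of type (I) at $(t_*,x_*)$ passing through $(t_0,x_0)$. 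Theorem 3.4(ii) then forces $\ell(t_0,x_0)$ to be a singleton, contradicting singularity of $(t_0,x_0)$. If you want to keep a constructive, convex-analytic proof in the spirit of your proposal, you would at minimum need to replace the barrier by an argument that survives $c_0\to0$; otherwise I recommend switching to the contradiction/Brouwer scheme, which uses only Theorem 3.4 and the uniform gradient bound on $\overline D$.
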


\begin{proof}
Since $\sigma(x)$ is convex and Lipschitz, then $D=$\ dom$\sigma^*$ is bounded. Hence, $D\subset B'(0,M)$  for some positive number $M.$ Let $\epsilon >0.$ We choose a fixed number $\delta >0$ such that $\delta\, \sup_{|p|\le M}\, |H_p(p)| \le \epsilon.$
\smallskip

Let $(t_0,x_0)$ be a singular point. Suppose contrarily that there exists $t_*\in [t_0, t_0+\delta]$ such that for any $y\in B'(x_0, \epsilon),$ the point $(t_*, y)$ is regular. Then $\ell(t_*,y) = \{p(y)\} =\{p(t_*,y)\}, \ y\in B'(x_0, \epsilon)$ is a singleton. We argue similarly as in the proof of Lemma 4.3. Since the multi-valued function  $y\mapsto \ell(t_*,y)$ is u.s.c, then $y\mapsto p(y)$ is continuous on $B'(x_0,\epsilon).$ Therefore the function $B'(x_0,\epsilon) \ni y\mapsto \Lambda (y)=x_0-(t_0-t_*) H_p(p(y))$ is also continuous.
\medskip

Note that, if $y\in B'(x_0,\epsilon)$ then
$$|\Lambda (y)-x_0|\le (t^*-t_0) |H_p(p(y)| \le \delta\, \sup_{|p|\le M}\, |H_p(p)| \le \epsilon.$$
Therefore $\Lambda$ is a continuous function from the closed ball $B'(x_0,\epsilon)$ into itself. By Brouwer theorem, $\Lambda $ has a fixed point $x_*\in B'(x_0,\epsilon), $ i.e., $\Lambda (x_*)=x_*,$ hence,
$$x_0=x_*+(t_0-t_*)H_p(p(x_*)).$$

In other words, there exists a characteristic curve $\mathcal C$ of the type (I) at $(t_*,x_*)$ described as in Theorem 3.4 passing $(t_0,x_0)$. Since $\ell(t_*,x_*)$ is a singleton, so is $\ell(t_0,x_0)$  and therefore $(t_0,x_0)$ is regular. This yields a contradiction.
\end{proof}

\begin{thm}
Assume (Hf1), (Hf2). Moreover, let $\sigma (x)$  be a Lipschitz function on $\R^n.$ Let $(t_0,x_0)\in \Omega$  be a singular point of Hopf formula $u(t,x)$ and $\epsilon >0.$ Then for any $t'\in (t_0,T)$ there exists $x'\in\R^n$ such that $(t',x')$ is a singular point for $u(t,x)$ and $|x'-x_0|\le (m+1)\epsilon$ for some $m\in \N.$
\end{thm}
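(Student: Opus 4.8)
The plan is to iterate the local propagation result of Lemma 5.1 finitely many times, using the fixed increment $\delta$ furnished by that lemma to march forward in the $t$-direction from $t_0$ to any prescribed level $t' \in (t_0, T)$, while controlling the total horizontal displacement. First I would invoke Lemma 5.1 with the given $\epsilon > 0$ to obtain a corresponding $\delta > 0$; crucially, this $\delta$ depends only on $\epsilon$ (through the bound $\delta\,\sup_{|p|\le M}|H_p(p)| \le \epsilon$) and not on the particular singular point, so the same $\delta$ can be reused at every stage of the iteration. Then I would choose $m \in \N$ large enough that $t' \le t_0 + m\delta$, and set up intermediate time levels $t_0 < t_1 < \cdots < t_m$ with $t_{j+1} - t_j \le \delta$ and $t_m = t'$ (for instance $t_j = t_0 + j\,\frac{t'-t_0}{m}$, so that each gap is at most $\delta$).

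Next I would build the chain of singular points inductively. Starting from the singular point $(t_0, x_0)$, Lemma 5.1 produces a singular point $(t_1, x_1)$ with $x_1 \in B'(x_0, \epsilon)$, hence $|x_1 - x_0| \le \epsilon$. Treating $(t_1, x_1)$ as a new singular point and applying Lemma 5.1 again (with the same $\epsilon$ and $\delta$, since $t_2 - t_1 \le \delta$) yields a singular point $(t_2, x_2)$ with $|x_2 - x_1| \le \epsilon$, and so on. After $m$ steps I reach a singular point $(t_m, x_m) = (t', x')$ where $x' := x_m$. The horizontal displacement is then estimated by the triangle inequality:
\begin{equation}\label{5.chain}
|x' - x_0| \le \sum_{j=0}^{m-1} |x_{j+1} - x_j| \le m\,\epsilon.
\end{equation}
This already gives a bound of the form $(m+1)\epsilon$ with room to spare; the extra $+1$ in the statement comfortably absorbs any adjustment needed in the final step, where the last gap $t' - t_{m-1}$ is at most $\delta$ but the point $(t', x')$ must be produced from $(t_{m-1}, x_{m-1})$, costing one more $\epsilon$ in the worst case.

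The only genuine point requiring care—and the step I expect to be the main obstacle—is verifying that Lemma 5.1 may legitimately be reapplied at each intermediate point, i.e. that the hypotheses of the lemma are stable under the iteration. Since the lemma's constant $\delta$ is uniform over all singular points (it is extracted solely from the global bound $M$ on $\mathrm{dom}\,\sigma^*$ and the supremum of $|H_p|$ on $B'(0,M)$), and since each newly constructed point $(t_{j+1}, x_{j+1})$ is genuinely singular by the lemma's conclusion, the induction closes without any deterioration of constants. One must also check that all intermediate times stay within $(t_0, T)$, which holds because $t_m = t' < T$ and the $t_j$ are increasing; thus every application of Lemma 5.1 is valid. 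Assembling the uniform $\delta$, the choice of $m$ with $t_0 + m\delta \ge t'$, the inductive chain of singular points, and the telescoping bound \eqref{5.chain} completes the proof.
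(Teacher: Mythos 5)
Your proposal is correct and follows essentially the same route as the paper: fix the uniform $\delta$ from Lemma 5.1, march from $t_0$ to $t'$ in at most $m+1$ time steps each of length at most $\delta$, apply the lemma at each stage to produce the next singular point within distance $\epsilon$, and conclude by the telescoping triangle inequality. The only (immaterial) difference is that the paper takes $m$ full steps of size exactly $\delta$ plus one final partial step, arriving at the bound $(m+1)\epsilon$, whereas you use $m$ equal steps of size $(t'-t_0)/m\le\delta$ and get the slightly sharper $m\epsilon$.
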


\begin{proof}
Let $\epsilon >0.$ Given $t'\in (t_0, T)$ there is $m\in \N$ such that $t_0+m\delta <t'\le t_0+(m+1)\delta,$  where $\delta $ is defined by Lemma 5.1. By induction,  after $m$ steps, let $(t_0+m\delta, x_m), \ x_m\in B'(x_{m-1},\epsilon)$ be the singular point  $(t_0,x_0)$ in Lemma 5.1,  we can take $x'\in B'(x_m,\epsilon)$ such that $(t',x')$ is a singular point. It is easily seen that
$$|x'-x_0|\le |x'-x_m| +|x_m-x_{m-1}| +\dots +|x_1-x_0| \le (m+1)\epsilon.$$
The theorem is now proved.
\end{proof}
Next, we show that singularities of Hopf  formula $u(t,x)$ can propagate on some Lipschitz arc.
\begin{thm}
Assume (Hf1), (Hf2). Moreover, let $\sigma (x)$  be a Lipschitz function on $\R^n.$ Let $(t_0,x_0)\in \Omega$  be a singular point of Hopf formula $u(t,x).$Then there exist a positive number $\rho$ and a Lipschitz arc $\mathcal S: [0,\rho] \ni s\mapsto x=x(s) \in \R^{n+1}$ with $x(0)=(t_0,x_0), \ x(s)\ne x(0)$ when $s\ne 0$ and $(s,x(s))$ is a singular point for $u(t,x), s\in [0, \rho].$
\end{thm}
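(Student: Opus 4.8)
The plan is to construct the singular Lipschitz arc by propagating forward in time, exploiting the forward propagation already established in Theorem 5.2 together with a compactness-and-selection argument to glue the discrete singular points into a genuine Lipschitz curve. The key structural fact available to me is Lemma 5.1: for each $\epsilon>0$ there is a $\delta>0$ (independent of the singular point) such that from any singular point $(t_0,x_0)$ and any $t_1\in[t_0,t_0+\delta]$ there is a singular point $(t_1,x_1)$ with $|x_1-x_0|\le\epsilon$. The ratio $\epsilon/\delta$ is controlled by $\sup_{|p|\le M}|H_p(p)|$, where $D=\mathrm{dom}\,\sigma^*\subset B'(0,M)$; this uniform Lipschitz-type bound is exactly what will give the arc its Lipschitz modulus.

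First I would fix the Lipschitz constant. Set $L=\sup_{|p|\le M}|H_p(p)|$, which is finite since $H\in C^1$ and $D$ is bounded. For every small $h>0$, Lemma 5.1 (applied with $\delta=h$ and the corresponding $\epsilon=Lh$) produces, starting from $(t_0,x_0)$, a singular point $(t_0+h,\,y)$ with $|y-x_0|\le Lh$. Iterating this over a partition $t_0=s_0<s_1<\cdots<s_N=t_0+\rho$ of mesh $h$ yields a piecewise-constant-in-slope singular polygonal path $\gamma_h$ whose spatial increments satisfy $|\gamma_h(s_{k+1})-\gamma_h(s_k)|\le L\,(s_{k+1}-s_k)$, so each $\gamma_h$ is $L$-Lipschitz in the $t$-variable on $[t_0,t_0+\rho]$, parametrized by time. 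Choose $\rho>0$ small enough that $t_0+\rho<T$.

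The heart of the argument is passing from these approximate singular polygonals to a limiting arc. The family $\{\gamma_h\}_{h>0}$ is uniformly bounded (they stay in $B'(x_0,L\rho)$) and uniformly $L$-Lipschitz, hence equicontinuous; by the Arzel\`a--Ascoli theorem a subsequence converges uniformly to an $L$-Lipschitz curve $x(\cdot):[t_0,t_0+\rho]\to\R^n$ with $x(t_0)=x_0$. Reparametrizing by $s=t-t_0$ on $[0,\rho]$ gives the desired map $\mathcal S:s\mapsto(t_0+s,\,x(s))$, which is Lipschitz into $\R^{n+1}$ and starts at $(t_0,x_0)$. It remains to verify that every point of the limit arc is singular, and this is where I expect the main obstacle to lie: singularity is a closed condition only because the regular set is open, so I would argue that the set $\Sigma$ of singular points of $u$ is closed in $\Omega$ (its complement is the set where $\ell(t,x)$ is a singleton, which is open by upper semicontinuity of $\ell$ together with Theorem 2.4). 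Since each vertex of each $\gamma_h$ is singular and the vertices become dense in the arc as $h\to0$, the uniform limit $x(s)$ is a limit of singular points, hence lies in $\Sigma$; thus $(s,x(s))$ is singular for every $s\in[0,\rho]$.

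The one remaining point to secure is the non-triviality condition $x(s)\ne x(0)$ for $s\ne0$, i.e. that the arc genuinely moves and does not collapse to the vertical segment $\{(t,x_0)\}$ — though note that since the parametrization is by $t$, the points $(t_0+s,x(s))$ are automatically distinct from $(t_0,x_0)$ in $\R^{n+1}$ whenever $s\ne 0$, so the stated conclusion holds directly from the time-graph parametrization. I would therefore emphasize in the writeup that the arc is a time-graph, which makes injectivity immediate and sidesteps the need for any further lower bound on spatial displacement. The delicate step to handle carefully is ensuring the constant $\delta$ in Lemma 5.1 is uniform along the whole construction so that a single mesh $h$ works at every vertex; this is guaranteed precisely because the bound in Lemma 5.1 depends only on $M$ and $H$, not on the particular singular point, allowing the induction to proceed without deterioration of constants.
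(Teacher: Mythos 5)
Your construction breaks down at the step where you pass from the approximating polygonals to the limit arc: you assert that the singular set $\Sigma$ is closed because its complement, the set where $\ell(t,x)$ is a singleton, ``is open by upper semicontinuity of $\ell$.'' This is not what upper semicontinuity gives you. If $\ell(t_0,x_0)=\{q_0\}$, u.s.c.\ only guarantees that $\ell(t,x)\subset B(q_0,\eta)$ for $(t,x)$ near $(t_0,x_0)$; a set contained in a small ball around $q_0$ can still contain two distinct maximizers, so nearby points can be singular. In general the differentiability set of a convex function is a dense $G_\delta$ but need not be open, and a sequence of singular points can converge to a regular point (the two maximizers $q_k^1\ne q_k^2$ at the vertices of $\gamma_h$ may merge in the limit, since Lemma 5.1 provides no uniform lower bound on $|q_k^1-q_k^2|$). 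Consequently you cannot conclude that every point of the Arzel\`a--Ascoli limit $x(\cdot)$ is singular, which is the entire content of the theorem; the Lipschitz bound, the equicontinuity, and the time-graph parametrization are all fine, but they only produce a Lipschitz curve whose points are limits of singular points, not singular points.

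This is precisely the difficulty the paper's proof is designed to overcome, and it does so by a completely different route: it does not iterate Lemma 5.1 at all. Instead it uses Theorem 3.5 to identify $D^*u(t_0,x_0)=\{(-H(q),q)\,:\,q\in\ell(t_0,x_0)\}$, considers $g(p,q)=p+H(q)$ on the compact convex set $D^-u(t_0,x_0)=\mathrm{co}\,D^*u(t_0,x_0)$, and shows $\alpha=\max g>0$; the case $\alpha=0$ is excluded by showing it would force $\sigma^*$ to be affine on the segment $[q_1,q_2]$ (via Lemma 3.3), contradicting the essential strict convexity of $\sigma^*$ that follows from $\sigma\in C^1$ (Rockafellar, Thm.\ 26.3). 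This yields $\partial D^-u(t_0,x_0)\setminus D^*u(t_0,x_0)\ne\emptyset$, which is exactly the hypothesis of the Cannarsa--Sinestrari propagation theorem (Thm.\ 4.2.2 of \cite{cs}) producing the Lipschitz singular arc. If you want to rescue your approach you would need a quantitative version of Lemma 5.1 giving a uniform positive lower bound on $\mathrm{diam}\,\ell$ along the chain of singular points, and no such bound is available from the Brouwer fixed-point argument used there.
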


\begin{proof}
Since $u(t,x)$ is not differentiable at $(t_0,x_0)$ then $\ell(t_0,x_0)$ contains more than one element, say $q_1, q_2\in \ell(t_0,x_0).$ By Theorem 3.5 one has 
$(p_i, q_i)\in D^*u(t_0,x_0)\subset D^-u(t_0,x_0), $ where $p_i=-H(q_i), \ i=1,2.$

Let $g(p,q)=p+H(q), \ (p,q)\in \R\times \R^n.$ Then $g$ is continuous on $\R^{n+1}.$ On the other hand, from $D^-u(t_0,x_0)={\rm co}D^*u(t_0,x_0)$ is a compact set and by definition of viscosity solution, $g(p,q)=p+H(q)\ge 0, \forall (p,q)\in D^-u(t_0,x_0), $ we deduce that
$$\max_{(p,q)\in D^-u(t_0,x_0)} g(p,q) =p_0+H(q_0) =\alpha \ge 0,\ (p_0,q_0)\in D^-u(t_0,x_0).$$

If $\alpha >0,$ then for any $n\in \N, $
$$p_0+\frac 1n +H(q_0)=\alpha +\frac 1n >\alpha, $$
thus $(p_0+\frac 1n, q_0)\notin D^- u(t_0,x_0).$ Therefore, $(p_0, q_0)\in \partial D^-u(t_0,x_0)\setminus D^*u(t_0,x_0)\ne \emptyset.$ By Theorem 4.2.2 \cite{cs}, we get the desire result.

\smallskip

It remains to show that $\alpha >0.$ Suppose contrarily, then $g(p,q)=0, $ for all $(p,q)$ in the straight line segment $[(p_1,q_1), (p_2,q_2)],$  i.e.
$$\forall \lambda \in [0,1], \ \lambda p_1+(1-\lambda )p_2 +H(\lambda q_1+(1-\lambda q_2)) =0.$$

On the other hand, since $(p_i, q_i)\in D^*u(t_0,x_0), i=1,2$ we have
$$p_1+H(q_1)=0; \quad p_2+H(q_2)=0.$$
Thus, $$\lambda p_1+(1-\lambda )p_2 +\lambda H(q_1) +(1-\lambda) H(q_2) =0.$$
Therefore, 
$\lambda H(q_1) +(1-\lambda) H(q_2) = H(\lambda q_1+(1-\lambda q_2)).$

Since $q_i, i=1,2 \in \ell(t_0,x_0),$ we have
\begin{equation}\label{sing} \langle x_0, q_1\rangle -\sigma^*(q_1)-t_0H(q_1)= \langle x_0, q_2\rangle -\sigma^*(q_2)-t_0H(q_2)= u(t_0,x_0)=\theta.\end{equation}
Moreover, 
$$\langle x_0, \lambda q_1+(1-\lambda )q_2\rangle -\sigma^*(\lambda q_1+(1-\lambda) q_2)-t_0H(\lambda q_1+(1-\lambda) q_2)\le \theta.$$
Combining this and equalities (\ref{sing}), we get
$$\lambda (\theta +\sigma^*(q_1)) +(1-\lambda) (\theta +\sigma^*(q_2))- \sigma^*(\lambda q_1+(1-\lambda)q_2)\le \theta.$$
Thus $$\lambda (\sigma^*(q_1)) +(1-\lambda) (\sigma^*(q_2))\le  \sigma^*(\lambda q_1+(1-\lambda)q_2)$$ and therefore
$$\lambda (\sigma^*(q_1)) +(1-\lambda) (\sigma^*(q_2))=\sigma^*(\lambda q_1+(1-\lambda)q_2),$$ since $\sigma$ is convex. 

Note that $[q_1,q_2]$ is contained in $\mathcal D=\{z\in \textrm{dom} \sigma^*\, | \ \partial \sigma^*(z)\ne \emptyset\}.$ Since $q_1\ne q_2,$ we apply Lemma 3.3 to see that the function $\sigma^*$ is not strictly convex on the straight line segment $[q_1,q_2].$ On the other hand, by assumption, the convex function $\sigma(x)$ is of class $C^1(\R^n),$  then its conjugate function $\sigma^*$ is essentially strictly convex on $D=$ dom$\sigma^*.$ In particular, $\sigma^* $ is strictly convex on $[q_1,q_2], $ see (\cite{ro}, Thm. 26.3). This is a contradiction.
\end{proof}

\noindent{\bf Example.} Let

$$u_t-\ln(1+u_x^2) =0,\
t>0,\ x\in \R ,$$
$$u(0,x)=\begin{cases} \frac {x^2}2, & |x|\le 1\\
x{\rm sgn} x-\frac 12, & |x|>1\end{cases}$$

A  viscosity solution defined by Hopf formula of this problem is:
$$u(t,x)=\max_{|y|\le 1}\, \{xy-\frac{y^2}{2}+t \ln (1+y^2)\}$$
Let $\varphi (t,x,y)=xy -\frac {y^2}{2} +t\ln (1+y^2),$ then $\varphi_y(t,x,y)=x-y+\frac{2ty}{1+y^2}.$

A simple computation shows that at point $(t_0,x_0)= (2, \frac 25),$ we have $ \varphi_y(2,\frac 25,y)=0\  \Leftrightarrow\  y_1=2; \ y_2=\frac{ -4+ \sqrt {11}}5,\  y_3=\frac{ -4- \sqrt {11}}5$ and the function $\varphi (t_0,x_0,y)$ attains its maximum at $y_1=2.$
\smallskip

There are three characteristic curves that go through the point $(2, \frac 25)$ as follows:

$\mathcal C_1: \ x=2-\frac {4t} 5,\quad  \text{starting at y=2}$ and

$\mathcal C_{i} = y_{i} -\frac{2y_{i}t}{1+y_{i}^2},\ i=2,\, 3, \  \text{starting at}\ y_{2}= \frac{ -4+ \sqrt {11}}5,\ y_{3}= \frac{ -4- \sqrt {11}}5 .$

We see that $\mathcal C_1$ is the characteristic curve of type (I) at $(t_0,x_0)$ and $\mathcal C_{2}, \ C_{3}$ are the characteristic curves of type (II) at $(2, \frac 25)$ since $\ell (2, \frac 25)=\{\sigma'(y_1)\} =\{2\}$ and $\sigma_y(y_i)\notin \ell(2, \frac 25),\ i=2,\, 3.$ Note that, $(2,\frac 25)$ is a regular point of $u(t,x).$

\medskip

Now let $(t_1, x_1)=(t_1,0)$ and let the characteristics $\mathcal C'_1$ starting from $y\in \R$ goes through $(t_1,0).$ Then $y$ is a root of equation
$y-\frac{2t_1^2y}{1+y^2}=0.$

If $0\le t_1\le \frac 1{2}$ then $(t_1,0)$ is regular point of $u(t,x)$ and $\mathcal C'_1:\ x=0$ is of type (I) at $(t_1,0).$

If  $t_1> \frac 1{2}$ then $(t_1,0)$ is singular, since $\ell(t_1,0) =\{y_2,\, y_3\},$ where $y_2=\sqrt{2t_1 -1},\ y_3=-\sqrt{2t_1-1}.$ In this case, the characteristic curves $\mathcal C'_2$ and $\mathcal C'_3$ starting at $y_2$ and $y_3$ are of type (I), and $\mathcal C'_1$ is of type (II) at $(t_1, 0).$

Let $t_* = \frac 1{ 2}.$ We have  $\varphi (\frac 1{ 2},x,y)=xy -\frac {y^2}{2} +\frac 12\ln (1+y^2),$ then $\varphi'_y(\frac 1{2},x,y)=x-y+\frac{y}{1+y^2}$ and $\varphi''_y(\frac 1{\sqrt 2},x,y)=-y^2\frac{3+y^2}{(1+y^2)^2} <0, \ y\ne 0.$ Therefore $\ell(  \frac 1{2},x)$ is a singleton for all $x\in \R.$ Applying Theorem 4.7, we see that the solution $u(t,x)$ is continuously differentiable on the strip $(0, \frac 1{\sqrt 2})\times \R^n.$
\smallskip

At last, the segment $x=0;\ t\in (\frac 1{2},T]$ is a set of singular points for $u(t,x).$ So the singularities of $u(t,x)$ propagate to the boundary.
\bigskip

{\bf Acknowledgments.} This research is funded by Vietnam National Foundation for Science and Technology Development (NAFOSTED) under grant number:  101.02-2013.09. A part of this paper was done when the author was working at the Vietnam Institute for Advance Study in Mathematics (VIASM). He would like to thank the VIASM for financial support and hospitality.

\bigskip


\end{document}